\newtheorem{Theo}{Theorem}
\newtheorem{Lem}[Theo]{Lemma}
\newtheorem{Prop}[Theo]{Proposition}
\theoremstyle{remark}
\newtheorem{exx}{Example}
\def\E{\mathbb{E}}
\def\Z{\mathbb{Z}}
\def\epsilon{\varepsilon}
\newcommand{\set}[1]{\left\{#1\right\}}
\def\tops{\xrightarrow[]{a.s.}}
\def\d{\ \mathrm{d}}
\def\var{\mathrm{Var}}
\begin{document}

 \title{Random sampling of long-memory stationary processes}

\author {Anne  Philippe$^{\text\small{1}}$  \,\, and \, Marie-Claude Viano$^{\text\small{2}}$ \\
\
{\small $^{^{\text\small{1}}}$
 Universit\'e de Nantes, Laboratoire de Math\'ematiques Jean Leray,
UMR CNRS 6629}  \\
{\small 
2 rue de la Houssini\`ere - BP 92208, 
44322 Nantes Cedex 3, France}\\
{\small $^{\text\small{2}}$ Laboratoire Paul Painlev\'e  UMR CNRS 8524,
 UFR de Math\'ematiques -- Bat M2} \\
{\small 
 Universit\'e de Lille 1, Villeneuve d'Ascq, 59655 Cedex, France}} 
\date{}
\maketitle
\begin{abstract}


  This paper investigates the second order properties of a stationary
  process after random sampling. While a short memory process gives
  always rise to a short memory one, we prove that long-memory can
  disappear when the sampling law has heavy enough tails. We prove that under
  rather general conditions  the existence of the spectral density is
  preserved by random sampling. We also investigate the effects of deterministic
  sampling on seasonal long-memory.
\end{abstract}

\vskip .5cm 
  
{\bf keywords:}   {Aliasing, FARIMA Processes, Generalised Fractional
  Processes, Regularly varying covariance, Seasonal long-memory, Spectral density}

\textbf{MS 2000 Mathematics Subject Classification: } 60G10 60G12 62M10 62M15

\section{Introduction}

The effects of random sampling on the second order characteristics and more
specially on the memory of a stationary second order discrete
time process are the subject of this paper.

We start from $\mathbf{X}=(X_n)_{n\geq 0}$, a stationary discrete time
second order process with covariance sequence $\sigma_X(h)$ and a
random walk $(T_n)_{n\geq 0}$ independent of $\mathbf{X}$.  The
sampling intervals $\Delta_{j}=T_{j}-T_{j-1}$ are independent
identically distributed integer random variables with common
probability law $S$.  We fix $T_0=0$.

Throughout the paper we consider the
sampled process $\mathbf{Y}$ defined by
\begin{equation}\label{def}
Y_n=X_{T_n}\quad n=0,1\ldots
\end{equation}

The particular case where $S=\delta_k$, the Dirac measure at point $k$, shall  be
mentioned as deterministic sampling (some authors prefer systematic or
periodic sampling).

Either because it corresponds to many practical situations, or because
it is a possible way to model data with missing values, there is an
extensive literature on the question of sampling random processes.
 
Around the sixties, an important amount of publications in signal
processing was devoted to the reconstruction of the spectral density
of $(X_n)_{n\geq 1}$ from a sampled version $(X_{T_n})_{n\geq 1}$. In
case of deterministic sampling, this reconstruction is prevented by
the aliasing phenomenon, (which is easily understandable from formula
(\ref{ds}) below).  Several authors noticed that aliasing can be
suppressed by introducing some randomness in the sampling procedure.
See, without exhaustivity, \cite{Beu}, \cite{Mas0} and \cite{Silv}
where several random schemes are introduced. The idea of sampling
along a random walk was first proposed in \cite{Silv} where the
authors proved that under some convenient hypotheses, such a sampling
scheme is alias-free when the characteristic function of $S$, $\hat
S(\lambda)=\sum_{j\geq 1} S(j)e^{ij\lambda}$, is injective.

Later, in the domain of time series analysis, attention was
particularly paid to the effect of sampling on parametric families of
processes. For example the effect of deterministic sampling on ARMA or
ARIMA processes is studied in \cite{Bre}, \cite{Nie} and \cite{Str}
among others. The main result is that the ARMA structure is preserved,
the order of the autoregressive part being never increased after
sampling. More generally, the stability of the ARMA family by random
sampling along a random walk is proved in \cite{Kad} and \cite{Opp}.
Precisely, if $A(L)X_n=B(L)\varepsilon_n$ and $A_1(L)Y_n=B_1(L)\eta_n$
are the minimal representations of $\mathbf{X}$ and of the sampled
process $\mathbf{Y}$, the roots of the polynomial $A_1(z)$ belong to
the set $\{\sum_{j\geq 1}S(j)r_k^j\}$ where the $r_k$'s are the roots
of $A(z)$. As $S(1)\neq 1$ implies $\vert \sum_{j\geq
  1}S(j)r_k^j\vert< \vert r_k\vert$, a consequence is that the
convergence to zero of $\sigma_Y(h)$ is strictly faster than that of
$\sigma_X(h)$. So, random sampling an ARMA process along a random walk
shortens its memory. In \cite{Opp} it is even pointed out that some
ARMA processes could give rise to a white noise through a well chosen
sampling law.

Only few papers deal with the question of the memory of the
process obtained by sampling a long-memory one. The reader can find in  \cite{Cha} and
\cite{Hwa} a detailed study of deterministic sampling  and time
aggregation of the FARIMA ($0,d,0$) process with related statistic questions. These authors point out that deterministic sampling does not affect the value of the memory parameter $d$ of the process. In the present paper we deal with random sampling of long memory processes.

In all the sequel, a second order stationary
process  $\mathbf{X}$ is said to have long memory if its covariance sequence is non-summable 
\begin{equation}\label{long}
\sum_{h\geq 0}\vert\sigma_X(h)\vert=\infty.
\end{equation}

In section \ref{Gen} we present some related topics such that $L^p$-convergence of $\sigma_Y(h)$ and absolute continuity of the spectrum. We show in particular that short memory is always preserved as well as absolute continuity of the spectral measure. The main results of the paper,
concerning changes of memory by sampling processes with regularly varying covariances, are
gathered in Section \ref{Princ}. We show that the
intensity of memory of such processes is preserved if $\E(T_1)=\sum jS(j)<\infty$, while this
intensity decreases when $\E(T_1)=\infty$. For sufficiently heavy tailed $S$,
the sampled process has short memory, which is somehow not surprising since
with a heavy tailed sampling law, the sampling intervals can be quite
large. In section \ref{Sea} we consider processes
presenting long-memory with seasonal effects, and investigate the particular
effects of deterministic sampling. We show that in some cases the
seasonal effects can totally disappear after sampling.

\section{Some features unchanged by random sampling} \label{Gen}

Taking $p=1$ in Proposition \ref{propcov} below confirms an intuitive claim: random sampling cannot
produce long-memory from short memory. 
Propositions \ref{bounded}, \ref{espfinie} and \ref{espinf} state that, at least in all situations investigated in this paper, random sampling preserves the
existence of a spectral density.

\subsection{Preservation of summability of the covariance}

\begin{Prop}\label{propcov}~
\begin{enumerate}[(i)]
\item\label{item:3} Let $p\geq 1$. If  $\sum\vert \sigma_X\vert^p<\infty$, the same holds for $\sigma_Y$,
\item\label{item:4} In the particular case $p\in [1,2]$ both processes  
$\mathbf{X}$ and $\mathbf{Y}$ have spectral
densities linked by the relation
$$
f_Y(\lambda)=\frac{1}{2\pi}\sum_{-\infty}^{+\infty}\int_{-\pi}^\pi \left(
e^{-i\lambda}\hat S(\theta)\right)^j f_X(\theta) d\theta, \label{spectdens}
$$
where $\hat S(\theta)=\E(e^{i\theta T_1})$ is the characteristic function of $S$.
\end{enumerate}
\end{Prop}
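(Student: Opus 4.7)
The starting point is the covariance identity
\[
\sigma_Y(h) = \cov(X_0, X_{T_h}) = \E[\sigma_X(T_h)] = \sum_{k \geq 0} \sigma_X(k)\, \mathbf{P}(T_h = k),
\]
obtained by conditioning on $T_h$ and invoking the independence of $\mathbf{X}$ and the random walk $(T_n)$, together with stationarity of $\mathbf{X}$.

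For part (i), Jensen's inequality (valid since $p \geq 1$) yields $|\sigma_Y(h)|^p \leq \E[|\sigma_X(T_h)|^p]$. Summing over $h$ and applying Fubini,
\[
\sum_{h \geq 0} |\sigma_Y(h)|^p \;\leq\; \sum_{k \geq 0} |\sigma_X(k)|^p\, U(k), \qquad U(k) := \sum_{h \geq 0} \mathbf{P}(T_h = k).
\]
The crucial observation is that, since the $\Delta_j$'s take values in $\{1,2,\ldots\}$, the random walk $(T_h)$ is strictly increasing; for each fixed $k$ the events $\{T_h = k\}$ are therefore pairwise disjoint in $h$, which forces $U(k) \leq 1$. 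The right-hand side is then dominated by $\sum_k |\sigma_X(k)|^p < \infty$.

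For part (ii), $p \in [1, 2]$ enables the Hausdorff--Young inequality: from $\sigma_X \in \ell^p(\Z)$ one gets that the spectral density $f_X$ exists and belongs to $L^{p'}([-\pi, \pi])$, while part (i) yields the same conclusion for $f_Y$. Using the spectral representation of $\sigma_X$ together with Fubini (justified by $f_X \in L^1$ and $|e^{iT_h\theta}| = 1$),
\[
\sigma_Y(h) = \E\left[\int_{-\pi}^{\pi} e^{iT_h \theta} f_X(\theta) \d\theta\right] = \int_{-\pi}^{\pi} \hat S(\theta)^h\, f_X(\theta) \d\theta \qquad (h \geq 0),
\]
since the $\Delta_j$'s are i.i.d. Plugging this expression into the Fourier series $f_Y(\lambda) = \tfrac{1}{2\pi} \sum_{h \in \Z} \sigma_Y(h) e^{-ih\lambda}$, extended by the symmetry $\sigma_Y(-h) = \sigma_Y(h)$, and exchanging the summation in $h$ with the integration in $\theta$ produces the stated formula.

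\textbf{Expected main obstacle.} The delicate point is justifying the interchange of sum and integral in the last step of (ii): since $|\hat S(\theta)| \leq 1$, the two-sided formal series $\sum_{j \in \Z} (e^{-i\lambda} \hat S(\theta))^j$ is not absolutely convergent and degenerates at the points where $|\hat S(\theta)| = 1$. The cleanest workaround is to work with finite partial sums in $h$ and pass to the limit via an Abel/C\'esaro argument, the $L^{p'}$ integrability of $f_X$ providing the domination needed to legitimize the exchange.
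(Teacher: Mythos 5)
Your proof is correct and follows essentially the same route as the paper's: the identity $\sigma_Y(h)=\E[\sigma_X(T_h)]$, then Jensen plus the strict monotonicity of $(T_h)$ for (i) (your renewal-measure bound $U(k)\le 1$ is exactly the paper's pathwise ``subsequence'' argument written in expectation form), and substitution of the spectral representation of $\sigma_X$ into the Fourier series of $\sigma_Y$ for (ii). The only remark is that the stated formula keeps the sum outside the integral, so the sum--integral interchange you flag as the main obstacle is not actually required; what needs justification is the mode of convergence of the Fourier series of $\sigma_Y$ itself, which the paper (consistently with your Hausdorff--Young remark) handles by noting that it converges in $L^2([-\pi,\pi])$ when $p\neq 1$.
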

\begin{proof}
 
The covariance sequence of the sampled process is given by 
\begin{equation}
  \begin{cases}
   \sigma_Y(0) = \sigma_X(0)\\
 \sigma_Y(h) = \E\left(\sigma_X(T_h)\right)=\sum_{j=h}^\infty \sigma_X(j)S^{*h}(j)\quad  h\geq 1\\
  \end{cases}
  \label{covech1}
 \end{equation}
where $S^{*h}$, the $h$-times convoluted of $S$ by itself, is the probability distribution of $T_h$.

 As the sequence $T_h$ is strictly increasing,
  $\sigma_X(T_h)$ is almost surely a subsequence of
  $\sigma_X(h)$. Then, (\ref{item:3}) follows from  
$$
  \sum_h\Big\vert\E\left(\sigma_X(T_h)\right)\Big\vert^p\leq
  \E\sum_h\vert\sigma_X(T_h)\vert^p\leq\sum_h
  \vert\sigma_X(h)\vert^p.
$$

The proof of (\ref{item:4}) is immediate, using (\ref{item:3})
\begin{eqnarray*}
f_Y(\lambda)&=&\frac{1}{2\pi}\sum_{j\in\Z}e^{-ij\lambda}\sigma_Y(j)=\frac{1}{2\pi}\sum_{j\in\Z}e^{-ij\lambda}\E(\sigma_X(T_j))\\
&=&\frac{1}{2\pi}\sum_{j\in\Z}\int_{-\pi}^\pi
e^{-ij\lambda}\E(e^{iT_j\theta})f_X(\theta)d\theta,
\end{eqnarray*}
where the series converges  in
$L^2([-\pi,\pi])$ when $p\neq 1$, the covariance being then square-summable without being summable.
 \end{proof}

\subsection{Preservation of the existence of a spectral density}

Concerning the existence of a spectral density some partial results are easily
obtained. 
Firstly, it is well known that the existence of a spectral density is preserved by deterministic sampling (see (\ref{ds}) below).
Second, from Proposition \ref{propcov} above it follows that, for any sampling
law,  the spectral density of $Y$ exists when the covariance of $X$ is square
summable. 
It should also be noticed that, when proving that the ARMA structure is preserved by random sampling, \cite{Opp} (see also \cite{Kad} for the multivariate case) gives an explicit form of the spectral density of $Y$ when $X$ is an ARMA process.

The three propositions below show that preservation of the existence of a spectral density by random sampling holds for all the models considered in the present paper. 

The proofs are based on the properties of Poisson kernel recalled in Appendix \ref{sec:few-techn-results}  
\begin{equation}\label{Poissdef}
P_s(t)=\frac{1}{2\pi}\left(\frac{1-s^2}{1-2s\cos t+ s^2}\right)\quad s\in[0,1[.
\end{equation}
and the representation given in Lemma \ref{preds} of the covariance of sampled process.
\begin{Lem}\label{preds}
For all $j\geq 0$,
\begin{eqnarray}
\sigma_Y(j)&=&\lim_{r\to 1^-}\int_{-\pi}^\pi e^{ij\theta} g(r,\theta)d\theta\label{et-apres} \\
\hbox{where}\nonumber\\ 
g(r,\theta)&=& {\frac{1}{4\pi}}\int_{-\pi}^\pi f_X(\lambda)\left(\frac{1}{1-re^{-i\theta}\hat
S(\lambda)}+\frac{1}{1-re^{-i\theta}\hat
S(-\lambda)}\right)d\lambda \label{g}\\
&=&\frac{1}{4}\int_{-\pi}^\pi f_X(\lambda)\left(\frac{1}{\pi}+P_{r\rho}(\tau-\theta)+P_{r\rho}(\tau+\theta)\right)d\lambda.\label{Pois}
\end{eqnarray}
\end{Lem}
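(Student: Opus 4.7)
The plan is to obtain $\sigma_Y(j)$ by Abel summation of a generating function in $r\in[0,1)$ built from the spectral representation of $\sigma_X$ and the identity $\sigma_Y(h)=\E\sigma_X(T_h)$ from (\ref{covech1}). Writing $\sigma_X(h)=\int_{-\pi}^\pi e^{ih\lambda}f_X(\lambda)\,d\lambda$ and applying Fubini (valid since $f_X\in L^1$ and $|e^{iT_h\lambda}|=1$) yields $\sigma_Y(h)=\int_{-\pi}^\pi \hat S(\lambda)^h f_X(\lambda)\,d\lambda$. The change of variable $\lambda\to-\lambda$, using $f_X(-\lambda)=f_X(\lambda)$, gives the mirror expression with $\hat S(-\lambda)^h$, and averaging the two produces the symmetric integrand of (\ref{g}).

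Next I would multiply by $r^h e^{-ih\theta}$ with $r\in[0,1)$ and sum over $h\ge 0$. The bound $|\hat S|\le 1$ makes the geometric series $\sum_h(re^{-i\theta}\hat S(\pm\lambda))^h$ converge uniformly in $\lambda$ with modulus $\le r<1$, so interchanging sum and integral is legitimate and produces exactly $2\pi g(r,\theta)$ in closed form. Integrating against $e^{ij\theta}$ extracts the $j$-th Fourier coefficient, $\int_{-\pi}^\pi e^{ij\theta}g(r,\theta)\,d\theta=r^j\sigma_Y(j)$, which tends to $\sigma_Y(j)$ as $r\to 1^-$, giving (\ref{et-apres}).

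For the Poisson form (\ref{Pois}) I would polar-decompose $\hat S(\lambda)=\rho(\lambda)e^{i\tau(\lambda)}$, so that the two denominators in (\ref{g}) become $1-r\rho e^{i(\tau-\theta)}$ and $1-r\rho e^{-i(\tau+\theta)}$. The elementary identity $(1-se^{i\phi})^{-1}=(1-se^{-i\phi})/(1-2s\cos\phi+s^2)$ combined with the definition (\ref{Poissdef}) gives the key rewriting
\[
\mathrm{Re}\,\frac{1}{1-r\rho e^{\pm i\phi}}=\frac{1-r\rho\cos\phi}{1-2r\rho\cos\phi+r^2\rho^2}=\tfrac12+\pi P_{r\rho}(\phi),
\]
applied once with $\phi=\tau-\theta$ and once with $\phi=\tau+\theta$; the residual sine (imaginary) contributions are disposed of by integrating against $f_X$ through the parities $\rho(-\lambda)=\rho(\lambda)$, $\tau(-\lambda)=-\tau(\lambda)$ and the evenness of $f_X$. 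Collecting the pieces produces (\ref{Pois}). The only real work is the bookkeeping that lets the two geometric denominators collapse into the symmetric two-kernel form with the announced constants; Fubini and uniform convergence of the geometric series on $[-\pi,\pi]$ for $r<1$ are routine.
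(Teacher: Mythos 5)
Your proposal is correct and follows essentially the same route as the paper: write $\sigma_Y(h)=\int_{-\pi}^{\pi}f_X(\lambda)\hat S(\lambda)^h\,d\lambda$, sum the geometric series in $re^{-i\theta}\hat S(\pm\lambda)$ to build $g(r,\theta)$, recover $r^j\sigma_Y(j)$ as the $j$-th Fourier coefficient, and obtain (\ref{Pois}) by taking real parts (the imaginary part being odd in $\lambda$). The only cosmetic difference is that you symmetrize via the change of variable $\lambda\to-\lambda$ in the spectral integral, whereas the paper reaches the same symmetric integrand by combining the two $z$-transforms $\hat\sigma_Y^{\pm}$ inside and outside the unit circle.
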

\begin{proof}
  The proof of the lemma is relegated in Appendix
  \ref{sec:proof-lemma-refpreds}.
\end{proof}

\begin{Prop}\label{bounded}
If $f_X$ is bounded in a neighbourhood of zero, the sampled process $Y$ has a spectral density given by
\begin{equation}\label{dslim}
f_Y(\lambda)=\lim_{r\to 1}{\frac{1}{4\pi}}\int_{-\pi}^\pi f_X(\lambda)\left(\frac{1}{1-re^{-i\theta}\hat
S(\lambda)}+\frac{1}{1-re^{-i\theta}\hat
S(-\lambda)}\right)d\lambda
\end{equation}
\end{Prop}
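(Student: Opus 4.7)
The plan is to identify the candidate spectral density $f_Y$ as the right-hand side of (\ref{dslim}) and to prove $g(r,\cdot)\to f_Y$ in $L^1([-\pi,\pi])$ as $r\to 1^-$; combined with Lemma \ref{preds} and the continuity of Fourier coefficients on $L^1$, this will identify $f_Y$ as the spectral density of $Y$. I would first record three elementary facts about $g(r,\cdot)$ extracted from the Poisson-kernel representation (\ref{Pois}): it is non-negative (since $f_X\ge 0$ and $P_s\ge 0$); its total mass is conserved in $r$, namely $\int_{-\pi}^\pi g(r,\theta)\,d\theta=\sigma_X(0)$ (a direct consequence of $\int_{-\pi}^\pi P_s(t)\,dt=1$ together with Fubini); and by Lemma \ref{preds} its $j$th Fourier coefficient tends to $\sigma_Y(j)$ as $r\to 1^-$.

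The heart of the argument is a.e.\ pointwise convergence $g(r,\theta)\to f_Y(\theta)$. For $\theta$ fixed and nonzero, I would split the $\lambda$-integration in (\ref{Pois}) into $|\lambda|\ge\delta$ and $|\lambda|<\delta$. On the first region, $\rho(\lambda)=|\hat S(\lambda)|\le\rho_\delta<1$ uniformly, so the Poisson kernels stay bounded in $r$ and dominated convergence against $f_X\in L^1$ settles this piece. On the second region, the hypothesis $f_X\le M$ combines with the elementary uniform bound $P_s(t)\le\frac{1}{2\pi|\sin t|}$ (valid for all $s\in[0,1)$, which I would verify directly from the definition of $P_s$): since $\tau(\lambda)=\arg\hat S(\lambda)\to 0$ as $\lambda\to 0$ while $\theta$ is held fixed away from zero, $|\sin(\tau(\lambda)\pm\theta)|$ stays bounded below for small $\delta$, providing the required uniform-in-$r$ dominant on $|\lambda|<\delta$.

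To upgrade a.e.\ convergence to $L^1$ convergence I would invoke Scheff\'e's lemma, for which I need $\int f_Y\,d\theta=\sigma_X(0)$. This follows from Fubini--Tonelli applied to the Poisson-kernel form of $f_Y$, justified by non-negativity of the integrand and $\int_{-\pi}^\pi P_{\rho(\lambda)}(t)\,dt=1$ for a.e.~$\lambda$. Once $L^1$ convergence is established, continuity of Fourier coefficients on $L^1$ yields
$$\int_{-\pi}^\pi e^{ij\theta}f_Y(\theta)\,d\theta=\lim_{r\to 1^-}\int_{-\pi}^\pi e^{ij\theta}g(r,\theta)\,d\theta=\sigma_Y(j)$$
by Lemma \ref{preds}, confirming that $f_Y$ is the spectral density of $Y$.

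The delicate point is the a.e.\ pointwise convergence: $P_{r\rho(\lambda)}(\tau(\lambda)\pm\theta)$ becomes nearly singular where $\rho(\lambda)\to 1$, which is the case at $\lambda=0$ since $\hat S(0)=1$; the naive bound $|1-re^{-i\theta}\hat S(\lambda)|\ge 1-r\rho(\lambda)$ blows up and cannot be used. It is precisely the assumption that $f_X$ is bounded in a neighbourhood of zero that allows one to dominate the integrand uniformly in $r$ on this quasi-singular region and thus push the limit inside the $\lambda$-integral.
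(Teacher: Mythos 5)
Your proof is correct, and although it shares the paper's basic ingredients (the representation of Lemma \ref{preds}, positivity of the Poisson-kernel integrand, and a splitting of the $\lambda$-integral near $\lambda=0$ so that boundedness of $f_X$ there controls the region where $\vert\hat S(\lambda)\vert$ approaches $1$), the limiting argument is genuinely different. The paper dominates $g(r,\theta)$ by a \emph{constant}, uniformly in $r$ and $\theta$: on $\vert\lambda\vert\le\varepsilon$ it pulls out $\sup f_X$ and invokes Lemma \ref{int=1} (the identity $g^*(r,\theta)\equiv 1$, i.e.\ the $\lambda$-integral of the kernel at fixed $\theta$, proved by a white-noise argument), and on $\vert\lambda\vert>\varepsilon$ it uses $\sup\vert\hat S\vert<1$ together with (\ref{Poisson0}); Lebesgue's theorem in $\theta$ then concludes. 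You instead prove pointwise convergence for a.e.\ $\theta$, check conservation of mass $\int g(r,\theta)\,d\theta=\sigma_X(0)=\int f_Y$ -- which needs only the elementary normalisation $\int_{-\pi}^{\pi}P_s(t)\,dt=1$ in the $\theta$-variable plus Tonelli, not the harder Lemma \ref{int=1} -- and finish with Scheff\'e's lemma. Your route buys $L^1$-convergence of $g(r,\cdot)$ and bypasses the white-noise lemma entirely; the paper's route yields as a by-product that $f_Y$ is bounded. Note also that your detailed treatment of the pointwise limit (the bound $2\pi P_s(t)\le 1/\vert\sin t\vert$ uniformly in $s$ on the inner region, plain domination on the outer one) is precisely what is needed to substantiate the paper's one-line claim that $g(r,\theta)$ converges for $\theta\neq 0$, so this part is a welcome addition rather than a detour. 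Two small points to tidy up: exclude $\theta=\pm\pi$ along with $\theta=0$ from the pointwise argument, since $\vert\sin(\tau(\lambda)\pm\theta)\vert$ is not bounded below there (use instead that $2\pi P_s(t)\le 1$ for $\vert t\vert\ge\pi/2$; in any case this is a null set, harmless for Scheff\'e), and observe that the identity $\int g(r,\theta)\,d\theta=\sigma_X(0)$ for every fixed $r$ is also immediate from (\ref{repg}) with $j=0$.
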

\begin{proof}
In the sequel we write 
\begin{equation}\label{roto}
\hat S(\lambda)=\rho(\lambda)e^{i\tau(\lambda)},
\end{equation}
often denoted $\rho e^{i\tau}$ for the sake of shortness.

The proof of the proposition simply consists in exchanging the limit and integration in (\ref{et-apres}). 

Firstly, it is easily seen that, if $\theta\neq 0$, $g(r,\theta)$ has a limit as $r\to 1^-$. Hence the proof is complete provided that conditions of Lebesgue's theorem hold. 

As we can suppose that the sampling is not deterministic, 
$$
\vert \hat S(\lambda)\vert <1 \quad\forall \lambda\in]0,\pi]
$$
(see \cite{Fell}).
Hence, thanks to the continuity of $\vert \hat S(\lambda)\vert$, 
$$
\sup_{\vert\lambda\vert>\varepsilon}\vert \hat
S(\lambda)\vert<1\quad\forall \varepsilon>0.
$$
The integral (\ref{g}) is split in two parts: choosing $\varepsilon$ such that $f$ is bounded on
$I_\varepsilon=[-\varepsilon,\varepsilon]$ and using the fact that the integrand in (\ref{g}) is positive (see (\ref{pos})).

\begin{eqnarray*}
&&\int_{I_\varepsilon} f_X(\lambda)Re \left(\frac{1}{1-re^{-i\theta}\hat
S(\lambda)}+\frac{1}{1-re^{i\theta}\hat
S(\lambda)}\right)d\lambda\\
& \leq & (\sup_{I_\varepsilon} f_X) \int_{I_\varepsilon} Re
\left(\frac{1}{1-re^{-i\theta}\hat S(\lambda)}+\frac{1}{1-re^{i\theta}\hat S(\lambda)}\right)d\lambda
\end{eqnarray*}
which leads, thanks to Lemma \ref{int=1}, to
\begin{eqnarray}
\int_{I_\varepsilon} f_X(\lambda)Re \left(\frac{1}{1-re^{-i\theta}\hat
S(\lambda)}+\frac{1}{1-re^{i\theta}\hat
S(\lambda)}\right)d\lambda&\leq& 4\pi(\sup_{I_\varepsilon} f_X) g^*(r,\theta)\nonumber\\
&=&4\pi(\sup_{I_\varepsilon}f_X)\label{premier}.
\end{eqnarray}

Now, 
\begin{eqnarray*}
&&\int_{I^c_\varepsilon} f_X(\lambda)Re\left(\frac{1}{1-re^{-i\theta}\hat
S(\lambda)}+\frac{1}{1-re^{i\theta}\hat
S(\lambda)}\right)d\lambda\\
&=&\pi\int_{I^c_\varepsilon}f_X(\lambda)\left(\frac{1}{\pi}+P_{r\rho}(\tau-\theta)+P_{r\rho}(\tau+\theta)\right)d\lambda.\nonumber
\end{eqnarray*}
Applying (\ref{Poisson0})
with 
$$
s=r\rho(\lambda)<\rho(\lambda),\quad t=\tau(\lambda)\pm \theta, \quad \hbox{and}\quad 
\eta=1-\sup_{\vert\lambda\vert>\varepsilon}\vert \hat S(\lambda)\vert
$$ 
yields,
\begin{eqnarray}\label{second}
\pi\int_{I^c_\varepsilon}f_X(\lambda)&&\left(\frac{1}{\pi}+P_{r\rho}(\tau-\theta)+P_{r\rho}(\tau+\theta)\right)d\lambda \nonumber\\
\leq&&\left(1+\frac{2}{\eta}\right)\int_{I^c_\varepsilon}
f_X(\lambda)d\lambda\nonumber\\
\leq && \left(1+\frac{2}{\eta}\right)\int_{-\pi}^\pi
f_X(\lambda)d\lambda.
\end{eqnarray}
Gathering (\ref{premier})  and (\ref{second}) leads to the result via Lebesgue's theorem.
\end{proof}

In the next two propositions, the spectral density of $X$ is allowed to be unbounded at zero. Their proofs are in the Appendix. 

We first suppose that the sampling law has a finite expectation. It shall be proved in subsection \ref{preser} that in this case the intensity of memory of $X$ is preserved.

\begin{Prop}\label{espfinie}
If $\E T_1<\infty$ and if the spectral density of $X$ has the form

\begin{equation}
  f_X(\lambda)=\vert \lambda\vert ^{-2d}\phi(\lambda)\label{eq:lmspe}
\end{equation}
where $\phi$ is nonnegative, integrable and bounded in a neighbourhood of zero, and where $0\leq d <1/2$,

then the sampled process has a spectral density $f_Y$ given by 
(\ref{dslim}).
\end{Prop}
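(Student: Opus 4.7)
The strategy parallels that of Proposition \ref{bounded}: from the representation of Lemma \ref{preds}, I establish pointwise convergence of $g(r,\theta)$ as $r \to 1^-$ for a.e.\ $\theta$ via dominated convergence on the inner $\lambda$-integral, then identify the limit as the density of $Y$. The new obstacle is that $f_X$ now has an integrable singularity at $0$, so the bound $\sup_{I_\varepsilon} f_X$ used in the previous proof is no longer available on a neighbourhood of the origin and the dominating function must be chosen differently.

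\textbf{Step 1: Pointwise limit of $g(r,\theta)$ for $\theta \neq 0$.} Fix $\theta \in (-\pi,\pi)\setminus\{0\}$. Since $\hat S$ is continuous with $\hat S(0)=1$, one has $\tau(\lambda)\to 0$ as $\lambda\to 0$, so there exists $\delta_\theta>0$ such that $|\tau(\lambda)\pm\theta|\geq |\theta|/2$ for $|\lambda|<\delta_\theta$. On this neighbourhood, the Poisson kernel estimate (\ref{Poisson0}), applied with argument bounded away from zero, yields $P_{r\rho(\lambda)}(\tau(\lambda)\pm\theta)\leq C_\theta$ uniformly in $r\in[0,1)$, so the integrand of (\ref{Pois}) is dominated by a constant times $f_X(\lambda)\leq \bigl(\sup_{|\lambda|\leq\delta_\theta}\phi\bigr)|\lambda|^{-2d}$, which is integrable near $0$ because $d<1/2$. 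On the complement $\delta_\theta\leq|\lambda|\leq \pi$, the argument used in Proposition \ref{bounded} applies verbatim: $\sup_{|\lambda|\geq\delta_\theta}|\hat S(\lambda)|\leq 1-\eta$ for some $\eta>0$, the Poisson kernels are uniformly bounded, and the integrand is controlled by a constant multiple of $f_X$, integrable on this set. Dominated convergence then gives $g(r,\theta)\to f_Y(\theta)$, where $f_Y(\theta)$ is the right-hand side of (\ref{dslim}).

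\textbf{Step 2: Identification of $f_Y$ as the spectral density.} The limit $f_Y$ is nonnegative and measurable. Tonelli applied to the explicit expression for $f_Y$ in the Poisson form (\ref{Pois}) at $r=1$, together with $\int_{-\pi}^{\pi}P_{\rho(\lambda)}(t)\,dt=1$ for a.e.\ $\lambda$ (valid because $\rho(\lambda)<1$ whenever $\lambda\neq 0$ under the non-deterministic hypothesis), gives
$$\int_{-\pi}^{\pi} f_Y(\theta)\,d\theta \;=\; \int_{-\pi}^{\pi} f_X(\lambda)\,d\lambda \;=\; \sigma_X(0)\;=\;\sigma_Y(0).$$
Combined with $g(r,\cdot)\geq 0$, the a.e.\ convergence $g(r,\cdot)\to f_Y$, and the fact that $\int g(r,\theta)\,d\theta\to\sigma_Y(0)$ (which is (\ref{et-apres}) at $j=0$), Scheff\'e's lemma delivers $L^1([-\pi,\pi])$-convergence $g(r,\cdot)\to f_Y$. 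Passing to the limit in $\sigma_Y(j)=\lim_{r\to 1^-}\int e^{ij\theta}g(r,\theta)\,d\theta$ then yields $\sigma_Y(j)=\int e^{ij\theta}f_Y(\theta)\,d\theta$ for every $j\geq 0$, identifying $f_Y$ as the spectral density of $Y$.

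The crux is Step 1: one must separate the two potential singularities of the integrand in $\lambda$, namely the singularity of $f_X$ at $0$ and the Poisson singularity arising when $r\rho(\lambda)$ is close to $1$. Continuity of $\hat S$ at $0$ forces $\tau(\lambda)\to 0$ there, which keeps $\tau(\lambda)\pm\theta$ bounded away from $0$ and tames the Poisson kernel on exactly the $\lambda$-neighbourhood where $f_X$ blows up; everything else is as in Proposition \ref{bounded}.
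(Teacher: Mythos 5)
Your proof is correct, but Step 2 takes a genuinely different route from the paper's. The paper also starts from Lemma \ref{preds} and passes the limit $r\to 1^-$ through the $\theta$-integral in (\ref{et-apres}), but it does so by exhibiting an explicit integrable dominating function for the family $g(r,\cdot)$: the hypothesis $\E T_1<\infty$ gives $\vert 1-\hat S(\lambda)\vert\le \E(T_1)\vert\lambda\vert$, hence $\vert\tau(\lambda)\vert\le \tfrac{\pi}{2}\E(T_1)\vert\lambda\vert$, and splitting the $\lambda$-integral at a cutoff $b(\theta)$ of order $\theta$ yields $g(r,\theta)\le C(\vert\theta\vert^{-2d}+1)$ uniformly in $r$, after which Lebesgue's theorem applies. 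You bypass the domination entirely: you use that $g(r,\cdot)\ge 0$, that $\int_{-\pi}^\pi g(r,\theta)\,d\theta=\sigma_Y(0)$ for every $r$ (the $j=0$ case of (\ref{repg}), so this integral is in fact constant in $r$, not merely convergent), and that the a.e.\ limit has the same total mass by Tonelli, and you conclude by Scheff\'e's lemma (applied along arbitrary sequences $r_n\to 1^-$). This buys strictly more: your argument never invokes $\E T_1<\infty$ --- only continuity of $\hat S$ at $0$ and $\vert\hat S(\lambda)\vert<1$ off a Lebesgue-null set --- so it simultaneously covers Proposition \ref{espinf} and shows preservation of the spectral density for any aperiodic sampling law; what the paper's route gives in exchange is the explicit pointwise bound $f_Y(\theta)\le C(\vert\theta\vert^{-2d}+1)$, which Scheff\'e does not produce. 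Two small slips to fix: the uniform bound $P_{r\rho}(\tau\pm\theta)\le C_\theta$ on $\vert\lambda\vert<\delta_\theta$ in Step 1 follows from (\ref{Poisson1})--(\ref{Poisson2}) (argument bounded away from zero), not from (\ref{Poisson0}) (which controls the kernel when $1-s$ is bounded away from zero); and ``non-deterministic'' should strictly be ``aperiodic'' to guarantee $\rho(\lambda)<1$ for all $\lambda\ne 0$, though the paper takes the same shortcut in Proposition \ref{bounded}.
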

\begin{proof}
  See Appendix \ref{sec:proof-prop-refespf}
\end{proof}
The case $\E(T_1)=\infty$ is treated in the next proposition, under the extra assumption (\ref{reg}) meaning that $S(j)$ is regularly varying at infinity.  We shall see in subsection \ref{decrea} (see Proposition \ref{v3}) how the parameter $d$ is transformed when the sampling law satisfies condition (\ref{reg}). In particular we shall see that if $1<\gamma<3-4d$ the covariance of the sampled process is square summable, implying the absolute continuity of the spectral measure of $Y$. The proposition below shows that this property holds for every $\gamma\in ]1,2[$

\begin{Prop}\label{espinf}
Assume that the spectral density of $X$ has the form (\ref{eq:lmspe})
If the distribution $S$ satisfies the following condition 
\begin{equation}\label{reg}
S(j)\sim c j^{-\gamma} \;\hbox{when}\;j\to\infty
\end{equation}
where $1<\gamma <2$ and with $c>0$, 
then the conclusion of Proposition \ref{espfinie} is still valid.

\end{Prop}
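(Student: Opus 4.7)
The plan is to identify $f_Y$ as the $L^1$-limit of the approximating family $g(r,\cdot)$ of Lemma \ref{preds} and then pass this convergence through the Fourier-coefficient identity (\ref{et-apres}). The starting observation, obtained from the Poisson form (\ref{Pois}) by Tonelli's theorem (the integrand is nonnegative) together with the normalisation $\int_{-\pi}^\pi P_s(a-\theta)\,d\theta=1$, is the uniform mass identity
\begin{equation*}
\int_{-\pi}^\pi g(r,\theta)\,d\theta \;=\; \sigma_X(0)\qquad\text{for every }r\in[0,1).
\end{equation*}

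Next I would establish pointwise convergence: for every $\theta\in[-\pi,\pi]\setminus\{0\}$,
\begin{equation*}
g(r,\theta)\xrightarrow[r\to 1^-]{} f_Y(\theta):=\frac14\int_{-\pi}^\pi f_X(\lambda)\left(\frac1\pi+P_{\rho(\lambda)}(\tau(\lambda)-\theta)+P_{\rho(\lambda)}(\tau(\lambda)+\theta)\right)d\lambda,
\end{equation*}
by dominated convergence inside the $\lambda$-integral. Split $[-\pi,\pi]=I_\varepsilon\cup I_\varepsilon^c$ and choose $\varepsilon$ so small that $|\tau(\lambda)|\leq|\theta|/2$ on $I_\varepsilon$ (possible since $\tau$ is continuous and $\tau(0)=0$). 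On $I_\varepsilon$ the denominators $1-2r\rho\cos(\tau\pm\theta)+r^2\rho^2$ stay bounded below by a positive constant depending only on $\theta$, so $P_{r\rho(\lambda)}(\tau(\lambda)\pm\theta)\leq C(\theta)$ uniformly in $r<1$; the factor $f_X(\lambda)=O(|\lambda|^{-2d})$ with $d<1/2$ is Lebesgue-integrable on $I_\varepsilon$. On $I_\varepsilon^c$, exactly as in Proposition \ref{bounded}, the fact that $S$ is nondeterministic (implied by (\ref{reg})) yields $\sup_{|\lambda|\geq\varepsilon}|\hat S(\lambda)|\leq 1-\eta<1$, whence $1-r\rho(\lambda)\geq\eta$ and $P_{r\rho(\lambda)}$ is uniformly bounded; combined with $f_X\in L^1$ this provides the missing dominating function.

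Applying the same Tonelli-Poisson computation to $f_Y$ itself gives $\int_{-\pi}^\pi f_Y(\theta)\,d\theta=\sigma_X(0)$. The nonnegative functions $g(r,\cdot)$ converge a.e.\ to the nonnegative $f_Y$ with matching total mass, so Scheffé's lemma delivers $\int_{-\pi}^\pi|g(r,\theta)-f_Y(\theta)|\,d\theta\to 0$ as $r\to 1^-$. Inserting this $L^1$ convergence into (\ref{et-apres}) yields $\sigma_Y(j)=\int_{-\pi}^\pi e^{ij\theta}f_Y(\theta)\,d\theta$, so $f_Y$ is the spectral density of $Y$; the equivalence of the two forms (\ref{g}) and (\ref{Pois}) for $g$ then identifies the pointwise limit $f_Y$ with the right-hand side of (\ref{dslim}).

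The main obstacle is the single bad frequency $\theta=0$: there the integrand of $g(r,0)$ becomes unbounded in $\lambda$ as $r\to 1^-$, and one could fear that the measures $g(r,\theta)\,d\theta$ develop a singular atom at $0$. The cleanest way I would handle this is the direct Tonelli computation $\int f_Y=\sigma_X(0)$, which matches the constant mass $\int g(r,\cdot)=\sigma_X(0)$ and makes Scheffé's lemma applicable. This is what replaces the $\theta$-uniform bound constructed in the proof of Proposition \ref{bounded}, which breaks down here because $\sup_{I_\varepsilon}f_X=+\infty$ whenever $d>0$.
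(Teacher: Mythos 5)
Your proof is correct, but it follows a genuinely different route from the paper's. The paper's proof works by exhibiting an explicit $r$-uniform, $\theta$-integrable dominating function for $g(r,\theta)$ and then applying Lebesgue's theorem to (\ref{et-apres}); to build that dominant it needs sharp local control of $\tau(\lambda)$ and $\rho(\lambda)$ near $\lambda=0$, which is exactly where the hypothesis (\ref{reg}) with $1<\gamma<2$ enters (via Lemma \ref{control} and Zygmund's asymptotics for $\sum j^{-\gamma}(1-\cos j\lambda)$ and $\sum j^{-\gamma}\sin j\lambda$), followed by a six-interval splitting of the $\lambda$-domain. You instead replace domination in $\theta$ by Scheff\'e's lemma: the constant-mass identity $\int_{-\pi}^{\pi}g(r,\theta)\,d\theta=\sigma_X(0)$ (a Tonelli computation using $\int_{-\pi}^{\pi}P_s(a-\theta)\,d\theta=1$, in the spirit of Lemma \ref{int=1}), pointwise convergence of $g(r,\cdot)$ off $\theta=0$, and the matching mass $\int f_Y=\sigma_X(0)$ together give $L^1$ convergence, which passes through (\ref{et-apres}). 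The individual steps check out: the uniform bound $P_{r\rho}(\tau\pm\theta)\leq C(\theta)$ on $I_\varepsilon$ follows from $1-2s\cos t+s^2\geq(1-s)^2+2s(1-\cos t)$ once $\varepsilon$ is chosen so that $|\tau|\leq|\theta|/2$ there, the bound on $I_\varepsilon^c$ uses $\sup_{|\lambda|\geq\varepsilon}|\hat S(\lambda)|<1$ (valid under (\ref{reg}) since $S(j)>0$ for large $j$, so the support of $S$ is aperiodic), and Scheff\'e applies along any sequence $r_n\to1^-$. What is striking is that your argument never uses (\ref{reg}) at all: its only inputs are $f_X\in L^1$ and $|\hat S(\lambda)|<1$ for $\lambda\neq0$, so it proves in one stroke the common generalisation of Propositions \ref{bounded}, \ref{espfinie} and \ref{espinf} (existence of a spectral density for $Y$ whenever $X$ has one and the sampling is nondeterministic and aperiodic). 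The trade-off is that the paper's heavier computation yields, as a by-product, quantitative pointwise bounds on $g(r,\theta)$ (hence on $f_Y$) near $\theta=0$, reflecting the transformed memory parameter, whereas your argument gives existence and the $L^1$ identification (\ref{dslim}) but no pointwise control of the singularity of $f_Y$.
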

\begin{proof}
  see Appendix \ref{sec:proof-prop-refesp}
\end{proof}

\section{Random sampling of processes with regularly varying covariances} \label{Princ}

 The propositions below are valid for the family of stationary processes whose covariance $\sigma_X(h)$ decays arithmetically, up to a slowly varying factor. In the first paragraph we show that if $T_1$ has a finite
 expectation, the rate of decay of the covariance is unchanged by sampling.
 We then see that, when
 $\E(T_1)=\infty$, the memory of the sampled process is reduced according to  the largest finite moment of $T_1$.

\subsection{Preservation of the memory when $\E(T_1)<\infty$ }\label{preser}

\begin{Prop}\label{fini} 
  Assume that the covariance of $X$ is of the form 
  $$
  \sigma_X(h)=
  h^{-\alpha}L(h), 
  $$
  where $0<\alpha<1$ and where $L$ is slowly
  varying at infinity and ultimately monotone (see \cite{Bin}).  If
  $\E(T_1)<\infty$, then, the covariance of the sampled process
  satisfies $$
  \sigma_Y(h) \sim  h^{-\alpha}(\E(T_1))^{-\alpha} L(h) $$
as $h\to \infty$.  
 \end{Prop}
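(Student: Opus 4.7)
The starting point is the representation $\sigma_Y(h)=\E(\sigma_X(T_h))$ from (\ref{covech1}). The heuristic is clear: by the strong law of large numbers $T_h\approx \mu h$ with $\mu:=\E(T_1)$, and by regular variation $\sigma_X(T_h)\approx(\mu h)^{-\alpha}L(\mu h)\sim\mu^{-\alpha}h^{-\alpha}L(h)$. The task is to upgrade this pointwise asymptotic to one in expectation via dominated convergence, applied to the normalised ratio
$$
U_h:=\frac{\sigma_X(T_h)}{h^{-\alpha}L(h)}=\left(\frac{T_h}{h}\right)^{-\alpha}\frac{L(T_h)}{L(h)}.
$$

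For the pointwise limit, SLLN yields $T_h/h\to\mu$ almost surely, so $(T_h/h)^{-\alpha}\to\mu^{-\alpha}$ a.s.\ and in particular $T_h\to\infty$ a.s. The uniform convergence theorem for slowly varying functions (uniformity in $t$ on compact subsets of $(0,\infty)$) then gives $L(T_h)/L(h)\to 1$ a.s., whence $U_h\to\mu^{-\alpha}$ a.s.

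For the dominating function, I exploit two structural facts. First, $\Delta_j$ is a strictly positive integer, so $T_h\geq h$ a.s. Second, because $L$ is ultimately monotone it has constant sign on some $[h_1,\infty)$, so I may replace $L$ and $\sigma_X$ by their absolute values without loss of generality. Fixing any $\delta\in(0,\alpha)$, Potter's bound for slow variation supplies an $h_0\geq h_1$ such that for all $h\geq h_0$,
$$
\frac{L(T_h)}{L(h)}\leq 2\left(\frac{T_h}{h}\right)^{\delta},
$$
since $T_h\geq h$. Therefore $|U_h|\leq 2(T_h/h)^{\delta-\alpha}\leq 2$, giving the constant dominating function required by Lebesgue's theorem. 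Passing to the limit then yields $\E(U_h)\to\mu^{-\alpha}$, which is exactly the claimed equivalence $\sigma_Y(h)\sim\mu^{-\alpha}h^{-\alpha}L(h)$.

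The only non-routine step is obtaining the uniform bound on $U_h$; once Potter's inequality is invoked, everything else is a packaging of SLLN with standard properties of slowly varying functions. The role of ultimate monotonicity of $L$ is modest but real: it lets me sign-regularise and apply Potter's bound cleanly.
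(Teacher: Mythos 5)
Your proof is correct and follows essentially the same route as the paper: the representation $\sigma_Y(h)=\E(\sigma_X(T_h))$, the normalised ratio $(T_h/h)^{-\alpha}L(T_h)/L(h)$, almost sure convergence via the strong law of large numbers combined with slow variation, and then dominated convergence. The only deviation is technical: where you get the uniform bound from Potter's inequality (and $L(T_h)/L(h)\to 1$ from the uniform convergence theorem), the paper factors the ratio as $(T_h/h)^{-\alpha/2}\cdot\bigl(T_h^{-\alpha/2}L(T_h)\bigr)/\bigl(h^{-\alpha/2}L(h)\bigr)$ and uses $T_h\geq h$ together with the eventual monotonicity of $x^{-\alpha/2}L(x)$ to bound each factor by $1$; both devices are standard and interchangeable here.
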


\begin{proof}~
We  prove that, as $h\to\infty$
\begin{equation}
   \frac{\sigma_X(T_h)}{h^{-\alpha}L(h)}=
   \left(\frac{T_h}{h}\right)^{-\alpha}\frac{L(T_h)}{L(h)}\tops (\E(T_1))^{-\alpha}
\label{cvps}
 \end{equation} 
and  that, for $h$ large enough,
\begin{equation}\label{leb}
\left| \frac{\sigma_X(T_h)}{h^{-\alpha}L(h)}\right| \leq 1.
 \end{equation} 

Since $T_h$ is the sum of independent and identically distributed random variables $T_h=\sum_{j=1}^h
\Delta_j$, with common distribution $T_1\in L^1$,  the law of large numbers
leads to 
\begin{equation}\label{lgn}
\frac{T_h}{h}\tops \E(T_1).
\end{equation}
Now 
\begin{equation}\label{cvL}
\frac{L(T_h)}{L(h)}\tops 1,
\end{equation}
indeed, $T_h\geq h$ because the intervals $\Delta_j \geq
1$ for all $j$ and (\ref{lgn}) implies that for $h$ large enough
$$
T_h\leq  2\E(T_1) h,
$$
Therefore, using the fact that $L$ is ultimately monotone, we have  
\begin{equation}\label{double}
L(h)\leq L(T_h)\leq L(2\E(T_1) h)
\end{equation}
if $L$ is ultimately increasing (and the reversed inequalities if $L$ is
ultimately decreasing). Finally (\ref{double}) directly leads to (\ref{cvL}) since $L$ is
slowly varying at infinity (see Theorem 1.2.1 in \cite{Bin}). Clearly,
(\ref{lgn}) and (\ref{cvL}) imply the convergence (\ref{cvps}).

In order to prove (\ref{leb}), we write
$$
\frac{\sigma_X(T_h)}{h^{-\alpha}L(h)}=\left(\frac{T_h}{h}\right)^{-\alpha/2}\left(\frac{T_h}{h}\right)^{-\alpha/2}\frac{L(T_h)}{L(h)}=\left(\frac{T_h}{h}\right)^{-\alpha/2}
\frac{T_h^{-\alpha/2}L(T_h)}{h^{-\alpha/2}L(h)}.
$$
As $\alpha>0$ and $T_h\geq h$,
$$
\left(\frac{T_h}{h}\right)^{-\alpha}\leq 1.
$$
Moreover, $h^{-\alpha/2}L(h)$ is decreasing for $h$
large enough (see \cite{Bin}), so that
$$
\frac{T_h^{-\alpha/2}L(T_h)}{h^{-\alpha/2}L(h)}\leq 1.
$$
These two last inequalities  lead to (\ref{leb})

Since $\sigma_Y(h)= \E\left(\sigma_X(T_h)\right)$ we conclude the 
proof by applying  Lebesgue's theorem, and 
we get  
$$
\frac{\sigma_Y(h)}{h^{-\alpha}L(h)}\to (\E(T_1))^{-\alpha}. 
$$

\end{proof}
\subsection{Decrease of memory when $\E(T_1)=\infty$ }\label{decrea}

  If $\E(T_1)=\infty$ it is known that $T_h/h \to\infty$, implying that the limit in (\ref{cvps}) is zero.
  In other words, in this case the  convergence to zero of $\sigma_Y(h)$ could be faster than $h^{-\alpha}$.
  The aim of this section is to give a
  precise evaluation of this rate of convergence.

\begin{Prop}\label{v3} 
Assume that the covariance of $\mathbf{X}$ satisfies 
\begin{equation}\label{majX}
\vert\sigma_X(h)\vert \leq  c h^{-\alpha}  
\end{equation}
where $0<\alpha<1$.

If 
\begin{equation}
  \liminf_{x\to\infty} x^\beta P(T_1 >x) >0\label{eq:DR}
  \end{equation}
  for some $\beta\in (0,1)$
(implying $\E(T_1^\beta)= \infty$ ) then
\begin{equation}\label{maj}
\vert \sigma_Y(h)\vert \leq C h^{-\alpha/\beta}.
\end{equation}

\end{Prop}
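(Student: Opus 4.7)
The starting point is the bound $|\sigma_Y(h)| \le \E|\sigma_X(T_h)| \le c\,\E(T_h^{-\alpha})$ coming from (\ref{covech1}) and (\ref{majX}), so the task reduces to proving $\E(T_h^{-\alpha}) = O(h^{-\alpha/\beta})$. Rather than trying to describe the law of $T_h$ finely, I would exploit the crude lower bound $T_h \ge M_h := \max_{1 \le j \le h}\Delta_j$, which gives $\E(T_h^{-\alpha}) \le \E(M_h^{-\alpha})$. Since the $\Delta_j$ are i.i.d., positive integer-valued, so that $M_h \ge 1$ a.s., the layer-cake formula yields
\begin{equation*}
  \E(M_h^{-\alpha}) = \alpha \int_1^\infty x^{-\alpha-1}\bigl(1 - P(\Delta_1 \ge x)\bigr)^h\,dx.
\end{equation*}

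Next, I would invoke hypothesis (\ref{eq:DR}) to obtain constants $c_0 > 0$ and $x_0 \ge 1$ such that $P(\Delta_1 \ge x) \ge c_0 x^{-\beta}$ for all $x \ge x_0$, whence $\bigl(1 - P(\Delta_1 \ge x)\bigr)^h \le \exp(-c_0 h x^{-\beta})$ on this range. The change of variables $x = y\,h^{1/\beta}$ then turns the contribution from $[x_0,\infty)$ into
\begin{equation*}
  \alpha\, h^{-\alpha/\beta} \int_{x_0 h^{-1/\beta}}^\infty y^{-\alpha-1} e^{-c_0 y^{-\beta}}\,dy \;\le\; \alpha\, h^{-\alpha/\beta} \int_0^\infty y^{-\alpha-1} e^{-c_0 y^{-\beta}}\,dy,
\end{equation*}
while the leftover piece from $[1, x_0]$ is bounded by $(1 - c_0 x_0^{-\beta})^h \cdot \alpha \int_1^{x_0} x^{-\alpha-1}\,dx$, which decays geometrically in $h$ and is absorbed by $h^{-\alpha/\beta}$.

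The only genuine obstacle is to verify that $I := \int_0^\infty y^{-\alpha-1} e^{-c_0 y^{-\beta}}\,dy$ is finite. The integrand is integrable at infinity because $\alpha > 0$, and integrable at the origin because $e^{-c_0 y^{-\beta}}$ vanishes faster than any polynomial as $y \to 0^+$, taming the singularity $y^{-\alpha-1}$. Once $I < \infty$ is established, the scaling $x = y\,h^{1/\beta}$, dictated by the tail exponent $\beta$, produces the rate $h^{-\alpha/\beta}$ directly, so (\ref{maj}) holds with a constant of the form $C = c\,\alpha I$ plus a geometrically decaying remainder. Notably, the proof needs nothing more than the one-sided heavy tail in (\ref{eq:DR}) and the bound (\ref{majX}); no information on the upper tail of $\Delta_1$ or on the slowly varying factor in $\sigma_X$ enters the argument.
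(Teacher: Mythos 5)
Your proof is correct and follows essentially the same route as the paper's: both rest on the lower bound $T_h\ge\max_{1\le l\le h}\Delta_l$, the estimate $P(T_1\le x)^h\le e^{-c_0hx^{-\beta}}$ drawn from the tail hypothesis (\ref{eq:DR}), and the substitution $x=y\,h^{1/\beta}$ that yields the rate $h^{-\alpha/\beta}$. The only difference is cosmetic: you pass to $\E(M_h^{-\alpha})$ via a continuous layer-cake integral, whereas the paper expands $\E(T_h^{-\alpha})$ as a discrete sum over the distribution function and then compares that sum with the very same integral.
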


\begin{proof}

From hypothesis (\ref{majX}), 
$$
\vert\sigma_Y(h)\vert\leq\E(\vert\sigma_X(T_h)\vert)\leq c\E(T_h^{-\alpha})
$$
Then,
\begin{eqnarray}
\E(T_h^{-\alpha})&=&\sum_{j=h}^{\infty}P(T_h\leq j)\left(j^{-\alpha}-(j+1)^{-\alpha}\right)\nonumber\\
&\leq& \alpha\sum_{j=0}^{\infty}P(T_h\leq j) j^{-\alpha-1}.\label{esp1}
\end{eqnarray}
From hypothesis (\ref{eq:DR}) on the tail of the sampling law, it follows that, for  large enough $h$,
\begin{eqnarray}
  P(T_h \leq j) &\leq&
  P\left(\max_{1\leq l \leq h} \Delta_l \leq j\right) =P\left(T_1\leq 
  j\right)^h \nonumber\\
  &\leq& \left(1-C j^{-\beta}\right)^h\leq e^{-\frac{Ch}{j^{\beta}}}. \label{esp2}
\end{eqnarray}
Gathering (\ref{esp1}) and (\ref{esp2}) then gives
\begin{eqnarray*}
\E(T_h^{-\alpha})\leq \alpha \sum_{j=h}^{\infty}j^{-\alpha-1}e^{-\frac{Ch}{j^{\beta}}}.
\end{eqnarray*}
The last sum has the same asymptotic behaviour as 
$$
\int_h^\infty x^{-\alpha-1}e^{-\frac{Ch}{x^{\beta}}}dx=h^{-\alpha/\beta}\int_0^{h^{1-\beta}}u^{\alpha/\beta-1}e^{-Cu} du, 
$$
and the result follows since, as $\beta <1$,

$$
\int_0^{h^{1-\beta}}u^{\alpha/\beta-1}e^{-Cu} du \xrightarrow[]{h\to\infty} \int_0^\infty u^{\alpha/\beta-1}e^{-Cu} du.
$$

\end{proof}
Next proposition states that the bound in Proposition \ref{v3} is sharp under some additional hypotheses.

\begin{Prop}\label{v2}
Assume that
$$
\sigma_X(h) =  h^{-\alpha} L(h)
$$
where $0<\alpha<1$ and where  $L$ is slowly varying at infinity and ultimately monotone

If
\begin{equation}\label{defbeta}
\beta =:\sup\set{ \gamma : \E(T_1^\gamma)<\infty } \in ]0,1[
\end{equation}
then, for every $\varepsilon> 0$,
\begin{equation}\label{minor1}
\sigma_Y(h) \geq C_1 h^{-\frac{\alpha}{\beta}-\varepsilon}. \end{equation}

\end{Prop}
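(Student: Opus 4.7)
The plan is to exploit the concentration of $T_h$ around $h^{1/\beta}$ together with the monotonicity of $\sigma_X$ in order to produce the lower bound. The key observation is that, unlike Proposition \ref{v3} whose hypothesis provides a pointwise lower bound on the tail $P(T_1 > x)$, here we only know that $\E(T_1^\gamma) < \infty$ for every $\gamma < \beta$, which is nonetheless enough to control $T_h$ from above in probability.

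First I would fix an arbitrary $\gamma \in (0,\beta)$, so that by the definition (\ref{defbeta}) of $\beta$, $\E(T_1^\gamma) < \infty$. Since $\gamma \leq 1$, the subadditivity inequality $(a+b)^\gamma \leq a^\gamma + b^\gamma$ for $a,b \geq 0$ yields, by induction,
$$T_h^\gamma \leq \sum_{j=1}^h \Delta_j^\gamma,$$
hence $\E(T_h^\gamma) \leq h\,\E(T_1^\gamma)$. Markov's inequality then gives, for every $M > 0$,
$$P\bigl(T_h > M h^{1/\gamma}\bigr) \leq \frac{\E(T_h^\gamma)}{(Mh^{1/\gamma})^\gamma} \leq \frac{\E(T_1^\gamma)}{M^\gamma}.$$
Choosing $M$ large enough, one obtains $P(T_h \leq M h^{1/\gamma}) \geq 1/2$ uniformly in $h$.

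Next, since $L$ is ultimately monotone and slowly varying (hence eventually positive), $\sigma_X(h) = h^{-\alpha} L(h)$ is positive and nonincreasing for $h$ large. As $T_h \geq h$ almost surely, on the event $\{T_h \leq M h^{1/\gamma}\}$ one has $\sigma_X(T_h) \geq \sigma_X(Mh^{1/\gamma})$, so
$$\sigma_Y(h) = \E\bigl(\sigma_X(T_h)\bigr) \geq \tfrac{1}{2}\,\sigma_X(Mh^{1/\gamma}) = \tfrac{1}{2} M^{-\alpha}\, h^{-\alpha/\gamma}\, L(Mh^{1/\gamma}).$$
Finally I would apply the standard Potter-type bound: for any $\eta > 0$, $L(x) \geq x^{-\eta}$ for all $x$ large enough. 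Substituting $x = Mh^{1/\gamma}$ yields
$$\sigma_Y(h) \geq \tfrac{1}{2}\, M^{-\alpha-\eta}\, h^{-\alpha/\gamma \,-\, \eta/\gamma}.$$
Given $\varepsilon > 0$, it then suffices to pick $\gamma < \beta$ close enough to $\beta$ so that $\alpha/\gamma < \alpha/\beta + \varepsilon/2$, and $\eta > 0$ small enough that $\eta/\gamma < \varepsilon/2$, to recover (\ref{minor1}).

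I do not foresee a serious obstacle: the argument is essentially a moment-method concentration estimate combined with the monotonicity of $\sigma_X$ and a Potter inequality. The only delicate point is the juggling of two arbitrarily small parameters, the one absorbed into the exponent via slow variation of $L$ and the one coming from the gap between $\gamma$ and $\beta$; both can be chosen freely, which is precisely why the conclusion involves the arbitrary $\varepsilon$.
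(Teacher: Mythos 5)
Your proof is correct, and it takes a genuinely different route from the paper. The paper's argument is an almost-sure one: it factors $\sigma_X(T_h)/h^{-\alpha/\beta-\varepsilon}$ so that one factor tends a.s.\ to $+\infty$ (via Proposition 1.3.6 of \cite{Bin}) and the other, $(T_h/h^\delta)^{-\alpha-\beta\epsilon/2}$, stays bounded below because the Marcinkiewicz--Zygmund strong law gives $T_h/h^\delta\to 0$ a.s.\ when $\E(T_1^{1/\delta})<\infty$; Fatou's lemma then even yields $\sigma_Y(h)/h^{-\alpha/\beta-\varepsilon}\to\infty$. You instead work in probability: subadditivity of $x\mapsto x^\gamma$ for $\gamma<\beta\leq 1$ gives $\E(T_h^\gamma)\leq h\,\E(T_1^\gamma)$, Markov's inequality localises $T_h$ below $Mh^{1/\gamma}$ with probability at least $1/2$, and eventual positivity and monotone decay of $\sigma_X$ turn this into the lower bound. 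Your version is more elementary (no strong law, no Fatou), produces an explicit constant, and in fact also yields divergence of the ratio since your final exponent is strictly better than $\alpha/\beta+\varepsilon$. Two small points to keep you honest: (a) you need $\sigma_X(T_h)\geq 0$ on the complement of the event $\{T_h\leq Mh^{1/\gamma}\}$ in order to drop that part of the expectation -- this holds for large $h$ since $T_h\geq h$ and $L$ is eventually positive; (b) the claim that $h^{-\alpha}L(h)$ is eventually nonincreasing is, strictly speaking, only true up to asymptotic equivalence (Theorem 1.5.3 of \cite{Bin}: $x^{-\alpha}L(x)$ is asymptotic to a nonincreasing function), but this suffices at the cost of a harmless $(1-o(1))$ factor, and the paper itself invokes the same fact in the proof of Proposition \ref{fini}.
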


\begin{proof}
Let $\epsilon > 0 $.  We have
\begin{equation*}
  \frac{\sigma_X(T_h)}{h^{-\frac{\alpha}{\beta}-\varepsilon}} =
  \frac{T_h ^{-\alpha}}{h^{-\frac{\alpha}{\beta}-\varepsilon}}  L(T_h) =  \frac{T_h^{-\alpha-\frac{\beta\epsilon}{2}}}{h^{-\frac{\alpha}{\beta}-\varepsilon}}  T_h^{\frac{\beta\epsilon}{2}} L(T_h)
=\left(\frac{T_h}{h^\delta}\right)^{-\alpha-\frac{\beta\epsilon}{2}}
T_h^{\frac{\beta\epsilon}{2}}  L(T_h)
\end{equation*}
where
$$
\delta=
\frac{\alpha/\beta+\varepsilon}{\alpha+\frac{\beta\epsilon}{2}}=\frac1{\beta}\left(\frac{\alpha+\beta\varepsilon}{\alpha+\beta\varepsilon/2}\right).
$$

Using Proposition 1.3.6 in \cite{Bin},
$$
T_h^{\frac{\beta\epsilon}{2}} L(T_h)
\xrightarrow[]{h\to\infty} +\infty \quad \hbox{a.s}
$$

Moreover  $\delta>\frac{1}{\beta}$. From (\ref{defbeta}), this implies  $\E(T_1^{1/\delta})<\infty$. Then, the law of large numbers of Marcinkiewicz-Zygmund (see \cite{stout} Theorem 3.2.3) yields
 \begin{equation}
   \frac{T_h}{h^\delta} \tops 0 \label{eq:MZ} \quad \hbox{as}\quad h\to \infty.    \end{equation}

Therefore by applying Fatou's Lemma
\begin{equation*}
\frac{\sigma_Y(h)}{h^{-\frac{\alpha}{\beta}-\varepsilon}} \xrightarrow[]{h\to\infty}
\infty.
\end{equation*}

\end{proof}

\subsection{The particular case of the FARIMA family}
A  FARIMA ($p,d,q$) process is defined from a white noise 
$(\varepsilon_n)_n$, a parameter $d\in ]0,1/2[$ and two polynomials $A(z)=z^p+a_1z^{p-1}+\ldots+a_p$ and $B(z)=z^q+b_1z^{q-1}+\ldots+b_q$ non vanishing on the domain $\vert z\vert\geq 1$, by
\begin{equation}\label{FARIMA}
X_n= B(L) A^{-1}(L)(I-L)^{-d}\varepsilon_n
\end{equation}
where $L$ is the back-shift operator $LU_n=U_{n-1}$.
The FARIMA ($0,d,0$)
$$
W_n=(I-L)^{-d}\varepsilon_n
$$
introduced by Granger \cite{Gra} is specially popular.

It is well known (see \cite{Bro}) that the covariance of the FARIMA ($p,d,q$) process satisfies 
\begin{equation}
\sigma_X(k)\sim c k^{2d-1}\quad \hbox{as} \;k\to\infty,
\end{equation}
allowing the above Propositions 
\ref{fini}, \ref{v3} and \ref{v2} to apply. The results can be
summarised as follows. 
\begin{Prop}\label{APhdsim}
Sampling a FARIMA process (\ref{FARIMA}), with a sampling law $S$ such that, with some $\gamma>1$
 \begin{equation}\label{heavy}
 S(j)=P(T_1 = j)\sim c j^{-\gamma}\quad \hbox{as} \;j\to\infty,
 \end{equation}
leads to a  process $(Y_n)_n$ whose 
 auto covariance function 
satisfies the following properties  
\begin{enumerate}[(i)]
\item\label{item:1}  if $\gamma>  2$,
\begin{equation}\label{less1}
    \sigma_Y(h)\sim  C h^{2d-1}.
\end{equation}
\item \label{item:2} if $\gamma \leq 2$,
\begin{equation}\label{less2}
    C_1 h^{(2d-1)/(\gamma-1) -\epsilon} \leq
    \sigma_Y(h)\leq C_2 h^{(2d-1)/(\gamma-1)},\quad \forall \epsilon>0.
\end{equation}
Consequently, the sampled process $(Y_n)_n$ has 
\begin{itemize}
\item   the same memory parameter  if  $\gamma \geq 2$, 
\item   a reduced long memory parameter if  $ 2(1-d)\leq \gamma < 2$,  
\item   short memory  if  $ 1<\gamma<2(1-d) $
\end{itemize}

\end{enumerate}

\end{Prop}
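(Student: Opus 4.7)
The plan is to reduce everything to the three general results just proved, namely Propositions \ref{fini}, \ref{v3} and \ref{v2}, each of which applies to the FARIMA case because $\sigma_X(h)\sim c\,h^{2d-1}$ has the required form $h^{-\alpha}L(h)$ with $\alpha=1-2d\in(0,1)$ and $L$ equal to a constant (trivially slowly varying and ultimately monotone). No additional analysis of $\mathbf{X}$ is needed: the entire proof amounts to reading off the moment and tail behaviour of $T_1$ from the single assumption (\ref{heavy}).

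The preliminary step is to translate (\ref{heavy}) into the hypotheses used in the three propositions. Comparing $\sum j^{q}S(j)$ with $\sum j^{q-\gamma}$ gives $\E(T_1^{q})<\infty$ iff $q<\gamma-1$, so the parameter $\beta$ of (\ref{defbeta}) equals $\gamma-1$ and lies in $(0,1)$ precisely when $1<\gamma<2$. An Abel summation then yields $P(T_1>x)\sim \frac{c}{\gamma-1}\,x^{-(\gamma-1)}$, which in particular secures the liminf hypothesis (\ref{eq:DR}) of Proposition \ref{v3} with exponent $\gamma-1$.

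With these conversions, part (\ref{item:1}) will be immediate: if $\gamma>2$ then $\E(T_1)<\infty$ and Proposition \ref{fini} delivers $\sigma_Y(h)\sim c\,(\E T_1)^{2d-1}\,h^{2d-1}$, which is (\ref{less1}). For part (\ref{item:2}) in the range $1<\gamma<2$, I would invoke Proposition \ref{v3} for the upper bound in (\ref{less2}) with exponent $\alpha/\beta=(1-2d)/(\gamma-1)$, and Proposition \ref{v2} for the matching lower bound up to the $\epsilon$ loss. The classification into three memory regimes then follows purely from the arithmetic comparison of $(1-2d)/(\gamma-1)$ with $1$: the sampled covariance is summable iff $\gamma<2(1-d)$, and the original memory parameter $d$ is preserved iff $\gamma\geq 2$.

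The only point I would scrutinise is the boundary $\gamma=2$, which falls into case (\ref{item:2}) but is not covered verbatim by any of Propositions \ref{fini}, \ref{v3}, \ref{v2}, since $\E(T_1)=\infty$ and $\beta=1$. The rate $h^{2d-1}$ predicted by (\ref{less2}) there is consistent with the ``same memory parameter'' regime and can be recovered by rerunning the integral estimate at the end of the proof of Proposition \ref{v3} with the exact $x^{-1}$ tail of $T_1$; this is the one piece of extra checking I expect the argument to require, and it is the only thing I view as a potential (mild) obstacle.
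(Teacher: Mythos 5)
Your proposal is correct and follows essentially the same route as the paper: reduce to Propositions \ref{fini}, \ref{v3} and \ref{v2} with $\alpha=1-2d$ and $\beta=\gamma-1$, the translation of (\ref{heavy}) into the moment condition $\E(T_1^q)<\infty$ iff $q<\gamma-1$ and the tail estimate $P(T_1>x)\sim \frac{c}{\gamma-1}x^{1-\gamma}$ being exactly what is needed. Your flag on the boundary case $\gamma=2$ (where $\beta=1$ falls outside the stated range of Propositions \ref{v3} and \ref{v2}) is a legitimate point that the paper itself silently glosses over, and your proposed repair is the right one (the upper bound is in fact trivial there from $T_h\geq h$, and the Fatou argument of Proposition \ref{v2} goes through verbatim with $\beta=1$).
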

\begin{proof}~

For (\ref{item:1}), $T_1$ has a finite expectation. Hence  Proposition
\ref{fini} applies leading to
$$
\sigma_Y(h)\sim c h^{2d-1}.
$$ 

For (\ref{item:2}), the conditions of Propositions \ref{v3} and \ref{v2} in section \ref{decrea}
are satisfied with $\beta=\gamma-1$ and $\alpha=1-2d$, leading to (\ref{less2})
Since 
$$
\frac{1-2d}{\gamma-1}> 1-2d, \quad \hbox{if} \quad 1<\gamma\leq 2, 
$$
the intensity of memory is then reduced except to the case $\gamma=2$.

The loss of memory is such that if $1<\gamma<2(1-d)$
it happens that $\frac{1-2d}{\gamma-1}>1$, implying the convergence of the series
$\sum \vert\sigma_Y(h)\vert$. In this case random sampling has created
short-memory. 
\end{proof}

We illustrate this last result by simulating  and sampling FARIMA(0,$d$,0)
processes.

Simulations of the trajectories are based on the moving average
representation of the FARIMA (see \cite{bardet1}).

The sampling distribution is 
\begin{equation}\label{simul}
P(S= k) = \int_k^{k+1} (\gamma-1) t^{-\gamma} \d t  \sim C k^{-\gamma},
\end{equation}
which is simulated using the fact that when $u$ is uniformly distributed on
$[0,1]$, the integer part of $u^{1/(1-\gamma)}$ is distributed according (\ref{simul}).
 \begin{figure}[htbp]
\vskip 2.5cm
   \begin{center}
     \includegraphics[height=7cm, width=12cm]{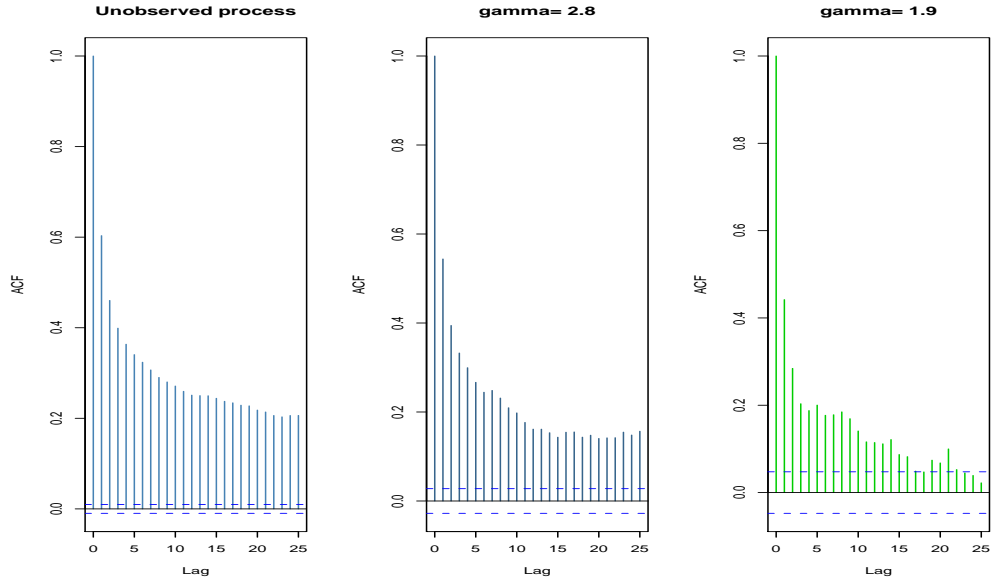}
   \end{center}
\vskip -2cm
  \caption{ Auto covariance functions of  $\mathbf{X}$  \textit{[left]}  and  
   of two sub sampled processes corresponding to  $\gamma= 2.8 $
   \textit{[middle]}  and $\gamma=1.9$ \textit{[right]}. The number of 
 observed values of the  sub sampled processes is equal to $5 000$  }
  \label{fig:acf}
\end{figure}

In Figure \ref{fig:acf} are depicted the empirical auto covariances of a trajectory of the
process $X$ and of two sampled trajectories $Y_1$ and $Y_2$ . The memory parameter of $X$ is
$d=0.35$ and the parameters of the two sampling distributions are respectively
$\gamma_1=2.8$ and $\gamma_2=1.9$.  According to Proposition \ref{APhdsim}, the
process $Y_1$ has the same memory parameter as $X$, while the memory intensity
is reduced for the process $Y_2$.

Then we estimate the memory parameters of the sampled processes by using
the FEXP procedure  introduced by \cite{SoulierMoulines2000,bardet2}. The FEXP
estimator is adapted to processes having a spectral density.
From Propositions \ref{espinf}
and \ref{espfinie}, this is the case for our sampled processes. 
Figure \ref{fig:dest} shows the estimate and a confidence interval. Two values, $d=0.1$  (lower
curves) and $d=0.35$ (upper curves) of the memory parameter of $X$ are
considered.  In abscissa, the parameter $\gamma$ of the sampling distribution is allowed to
vary between $1.7$ and $3.3$. From Proposition \ref{APhdsim} the 
value of the memory parameter is $d$ if $\gamma\geq 2$ and $d-1+\gamma/2$
otherwise. In the case $d=0.1$ short memory is obtained for $\gamma<1.8$. In
the case $d=0.35$, sample distributions leading to short memory are too heavy
tailed to allow tractable simulations. 

\begin{figure}[htbp]
  \begin{center}
    \includegraphics[height=7cm,width=14cm]{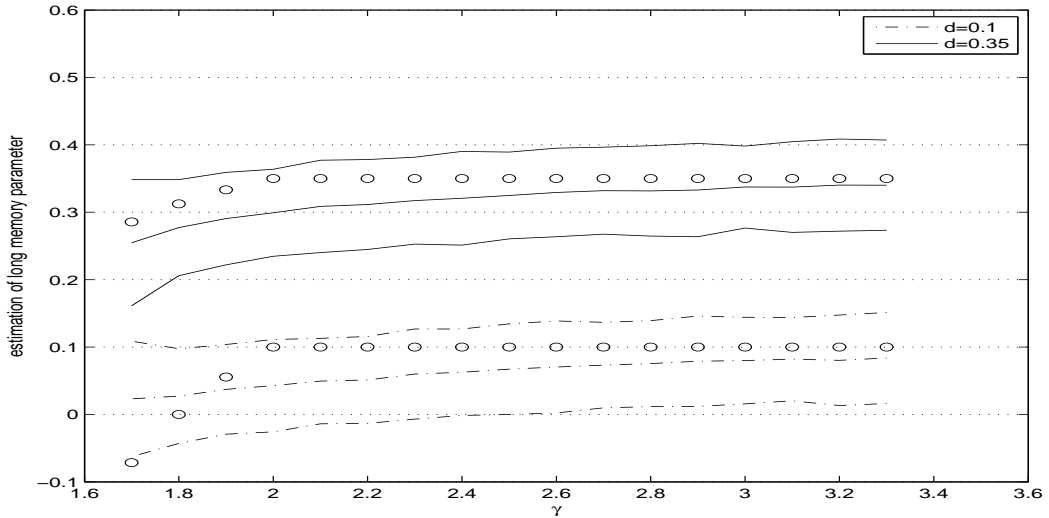}
  \end{center}
\caption{Estimation of the long memory parameter for $d=0.1$ and
    $d=0.35$ as function of parameter $\gamma$.  The confidence
    regions are obtained using 500 independent replications. The
    circles represent the theoretical values of the parameter obtained
    in Proposition \ref{APhdsim}. For each $\gamma$, the estimation of
    $d$ is evaluated on $5 000$ observations}
  \label{fig:dest}
\end{figure}

\section{Sampling  generalised fractional processes}\label{Sea}
 
Consider now the family of generalised fractional processes whose spectral
density has the form 
\begin{equation}\label{densseas}
f_X(\lambda)= \vert\Phi (\lambda)\vert^2\Big\vert \prod_{j=1}^M(e^{i\lambda}-e^{i\theta_j})^{-d_j}(e^{i\lambda}-e^{-i\theta_j})^{-d_j}\Big\vert^2,
\end{equation}
with exponents $d_j$ in $]0,1/2[$, and where $\vert\Phi (\lambda)\vert^2$ is
the spectral density of an ARMA process. 

\subsection{Decrease of memory by heavy-tailed sampling law}
See \cite{Gra}, \cite{Lei},
\cite{Oul} for results and references on this class of processes. 
Taking $M=1$ and $\theta_1=0$, it is
clear that this family includes the FARIMA processes, which belong to the
framework of section \ref{Princ}.
It is proved in \cite
{Lei} that  as $h\to \infty$ the covariance sequence is a sum of periodic
sequences damped by a regularly varying factor
\begin{equation}\label{covseas}
\sigma_X(h)=h^ {2d-1}\left(\sum c_{1,j}\cos(h\theta_j)+c_{2,j}\sin(h\theta_j) +o(1)\right)
\end{equation}
where $d=\max\{d_j; j=1,\ldots,M\}$ and where the sum extends over indexes
corresponding to $d_j=d$. 
Hence, these models generally
present seasonal long-memory, and for them the
regular form  $\sigma_X(h)=  h^{2d-1} L(h)$ is lost. Precisely, this
regularity  remains only when, in (\ref{densseas}),  $d=\max\{d_j; j=1,\ldots,M\}$ corresponds to the unique frequency $\theta=0$. In all other cases, from (\ref{covseas}), it has to be to be
replaced by 
$$
\sigma_X(h)=O(h^{2d-1}).
$$

From the previous comments it is clear that 
Propositions \ref{fini} and
\ref{v2} are no longer valid in the case of seasonal long-memory. That means
that in this situation, it is not sure that long memory is preserved by a sampling law having a
finite expectation. But it is true that long-memory can be lost. Indeed,
Proposition  \ref{v3} applies with $\alpha=1-2d$ where
$d=\max\{d_j; j=1,\ldots,M\}$, and  the following result holds:
\begin{Prop}
When sampling a process with spectral density (\ref{densseas}) along a random walk such that the sampling law satisfies (\ref{heavy}), the obtained process has short memory as soon as $1<\gamma<2(1-d)$ where $d=\max\{d_j; j=1,\ldots,M\}$.
\end{Prop}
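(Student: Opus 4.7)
The plan is to observe that this proposition is essentially a direct corollary of Proposition \ref{v3}, combined with the asymptotic expansion \eqref{covseas} for the covariance of generalised fractional processes. The only point to check is that the hypotheses of Proposition \ref{v3} are satisfied with the correct exponents, and that the resulting bound on $\sigma_Y(h)$ is summable in the stated range of $\gamma$.

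First I would extract from \eqref{covseas} the bound
$$
\abs{\sigma_X(h)} \leq c\, h^{2d-1} \quad \text{with } d=\max\{d_j;\, j=1,\ldots,M\},
$$
which holds because the trigonometric factors $\cos(h\theta_j)$ and $\sin(h\theta_j)$ are uniformly bounded by $1$ and the $o(1)$ term is eventually bounded. This matches hypothesis \eqref{majX} of Proposition \ref{v3} with $\alpha=1-2d$. Note that $\alpha\in(0,1)$ since $d\in]0,1/2[$.

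Next I would verify that the sampling law \eqref{heavy} fulfils \eqref{eq:DR}. From $S(j)\sim cj^{-\gamma}$ one obtains by summation $\Pr(T_1>x)\sim c'\, x^{-(\gamma-1)}$, so that \eqref{eq:DR} holds with $\beta=\gamma-1$. The assumption $1<\gamma<2(1-d)\leq 2$ guarantees $\beta\in(0,1)$, so Proposition \ref{v3} applies and yields
$$
\abs{\sigma_Y(h)} \leq C\, h^{-\alpha/\beta} = C\, h^{-(1-2d)/(\gamma-1)}.
$$

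Finally I would check the summability: we have $\sum_h \abs{\sigma_Y(h)} < \infty$ as soon as $(1-2d)/(\gamma-1) > 1$, which is equivalent to $\gamma - 1 < 1 - 2d$, i.e.\ $\gamma < 2(1-d)$. Combined with \eqref{long}, this shows that $\mathbf{Y}$ has short memory in the announced range of $\gamma$. There is essentially no obstacle here: the seasonal structure of $\sigma_X$ only matters through the uniform bound \eqref{majX}, so the oscillating factors in \eqref{covseas} play no role, and everything reduces to a direct application of Proposition \ref{v3}.
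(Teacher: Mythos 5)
Your proposal is correct and follows exactly the paper's own route: the paper derives this proposition as an immediate consequence of Proposition \ref{v3}, applied with $\alpha=1-2d$ (via the uniform bound $\sigma_X(h)=O(h^{2d-1})$ extracted from (\ref{covseas})) and $\beta=\gamma-1$ (from the tail condition (\ref{heavy})), then checks summability of $h^{-(1-2d)/(\gamma-1)}$ precisely when $\gamma<2(1-d)$. The only cosmetic slip is the reference to (\ref{long}) at the end, since short memory is the negation of that condition; the substance is fine.
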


\subsection{The effect of deterministic sampling}
\label{sec:dirac}

In a first step we just suppose that  $\mathbf{X}$ has a
spectral density $f_X$, which is  unbounded in one or several
frequencies (we call them singular frequencies). It is clearly the case of the
generalised fractional processes. The examples shall be taken in this family. 
We investigate the effect of deterministic sampling on the number and the values of the singularities.
Let us suppose that
$S=\delta_k$, the law concentrated at $k$.
It is well known that if  $\mathbf{X}$ has a spectral density, the spectral density of
the sampled process $\mathbf{Y}=(X_{k n})_n$ is 
\begin{eqnarray}\label{ds}
f_Y(\lambda)&=&\frac{1}{2 \ell+1}\sum_{j=-l}^l 
f_X\left(\frac{\lambda-2\pi j}{2 \ell+1}\right)\qquad\hbox{if}\; k=2 \ell+1\\
f_Y(\lambda)&=&\frac{1}{2 \ell}\left(\sum_{j=-\ell+1}^{\ell-1} f_X\left(\frac{\lambda-2
\pi j}{2\ell}\right)+
f_X\left(\frac{\lambda-2\pi \ell \hbox{sgn}(\lambda)}{2\ell}\right)\right)\qquad \hbox
{if}\; k=2\ell\nonumber
\end{eqnarray}
\begin{Prop}\label{Det}
Let the spectral density of $(X_n)_n$ have $N_X\geq 1$ singular frequencies. Then, denoting by $N_Y$ the number of singular frequencies of the spectral density of the sampled process $(Y_n=X_{kn})_n$
\begin{itemize}
\item $1\leq N_Y\leq N_X.$

\item $N_Y<N_X$ if and only if $f_X$ has at least two singular frequencies
  $\lambda_0\neq \lambda_1$ such that, for some integer number $j^*$,
$$
\lambda_0-\lambda_1=\frac{2\pi j^*}{k}
$$
\end{itemize}
\end{Prop}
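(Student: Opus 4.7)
The plan is to reduce the statement to a purely set-theoretic claim about the action of the ``multiplication by $k$ modulo $2\pi$'' map on singular frequencies. The key input is formula (\ref{ds}), which expresses $f_Y(\mu)$ as a (finite, equally weighted and nonnegative) sum of values of $f_X$ at arguments of the form $(\mu-2\pi j)/k$. Inspecting the ranges of $j$ in the two cases ($k=2\ell+1$ and $k=2\ell$), one sees that these $k$ arguments, reduced modulo $2\pi$, are exactly the $k$ preimages of $\mu$ under the map $T_k:\lambda\mapsto k\lambda\bmod 2\pi$ on the torus $\mathbb{T}=\mathbb{R}/2\pi\mathbb{Z}$. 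Because $f_X\ge 0$ and the number of summands is finite, no cancellation can occur, so $f_Y(\mu)=+\infty$ if and only if $f_X(\lambda)=+\infty$ for at least one $\lambda$ with $T_k(\lambda)=\mu$. Denoting by $\Sigma_X$ and $\Sigma_Y$ the singular sets (inside $\mathbb{T}$), this gives
$$
\Sigma_Y \;=\; T_k(\Sigma_X).
$$

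First I would verify the indexing carefully in the two parity cases of (\ref{ds}) to confirm the identification $\Sigma_Y=T_k(\Sigma_X)$; this is the one place where a small routine check is needed, and I expect it to be the main (though minor) technical obstacle, since the even case has its boundary term written with $\mathrm{sgn}(\lambda)$ and one must check that the two frequencies $\pm\pi$ are properly handled. Once this identification is in hand, the inclusion $N_Y\le N_X$ follows at once from the fact that the image of a finite set under any map has at most as many elements as the set, and the bound $N_Y\ge 1$ follows because $\Sigma_X\neq\emptyset$ (by hypothesis $N_X\ge 1$), so $\Sigma_Y=T_k(\Sigma_X)$ is nonempty as well.

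It remains to characterise when the inequality is strict. We have $N_Y<N_X$ if and only if $T_k$ fails to be injective on $\Sigma_X$, which happens precisely when there exist $\lambda_0\neq\lambda_1$ in $\Sigma_X$ with $T_k(\lambda_0)=T_k(\lambda_1)$, i.e.\ $k(\lambda_0-\lambda_1)\in 2\pi\mathbb{Z}$, which is exactly the condition
$$
\lambda_0-\lambda_1=\frac{2\pi j^*}{k}\quad\text{for some }j^*\in\mathbb{Z}
$$
appearing in the statement. Conversely, the existence of such a pair forces at least one collision in the image, hence $|\Sigma_Y|<|\Sigma_X|$. This gives the equivalence and completes the proof.
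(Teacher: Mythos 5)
Your argument is correct and is essentially the paper's own proof: identifying the $k$ arguments $(\mu-2\pi j)/k$ in (\ref{ds}) as the $k$ preimages of $\mu$ under $\lambda\mapsto k\lambda \bmod 2\pi$ is exactly the paper's observation that the intervals $I_j$ partition $[-\pi,\pi[$, and the nonnegativity of $f_X$ then gives $\Sigma_Y=T_k(\Sigma_X)$ with collisions occurring precisely when $\lambda_0-\lambda_1$ is a multiple of $2\pi/k$. Your torus-map phrasing is just a cleaner packaging of the same reduction.
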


\begin{proof}
  We choose $k=2\ell+1$ to simplify. The proofs for $k=2\ell$ are
  quite similar.  Now remark that for fixed $j$,
$$
\frac{\lambda-2\pi j}{2\ell+1}\in I_j:=\left[\frac{-\pi-2\pi
    j}{2\ell+1}, \frac{\pi-2\pi j}{2\ell+1}\right[,\quad
\hbox{for}\quad \lambda\in [-\pi,\pi[.
$$
The intervals $I_j$ are non overlapping and $\cup_{j=-\ell}^{\ell}
I_j=[-\pi,\pi[$.  Now suppose that $f_X$ is unbounded at some
$\lambda_0$. Then $f_Y$ is unbounded at the unique frequency
$\lambda^*$ satisfying $\lambda_0=\frac{\lambda^*-2\pi j_0}{2\ell+1}$
for a suitable value $j_0$ of $j$. In other words, to every singular
frequency of $f_X$ corresponds a unique singular frequency of $f_Y$.
As a consequence, the number of singular frequencies of $f_Y$ is at
least $1$, and cannot exceed the number of singularities of $f_X$.
  
It is clearly possible to reduce the number of singularities by
deterministic sampling. Indeed, two distinct singular frequencies of
$f_X$, $\lambda_0$ and $\lambda_1$ produce the same singular frequency
$\lambda^*$ of $f_Y$ if and only if
$$
\frac{\lambda^*-2\pi j_0}{2\ell+1}=\lambda_0,\quad
\hbox{and}\quad\frac{\lambda^*-2\pi j_1}{2\ell+1}=\lambda_1,
$$
which happens if and only if $\lambda_0-\lambda_1$ is a multiple of
the basic frequency $\frac{2\pi}{2\ell+1}$.

\end{proof}
Proposition \ref{Det} is illustrated by the three following examples showing
the various effects 
of the aliasing phenomenon when sampling generalised fractional processes.

We take $k=3$, so that the spectral density of $\mathbf{Y}$
is 
$$
f_Y(\lambda)=\frac{1}{3}\left(f_X\left(\frac{\lambda-2\pi}{3}\right)
  +f_X\left(\frac{\lambda}{3}\right)+f_X\left(\frac{\lambda+2\pi}{3}\right)\right).
$$

\begin{exx}
Firstly consider $f_X(\lambda)=\vert e^{i\lambda}-1\vert^{-2d}$, the
  spectral density of the FARIMA ($0,d,0$) process. Then
 $$
  f_Y(\lambda)=\frac{1}{3}\left(\vert
    e^{i\frac{\lambda-2\pi}{3}}-1\vert^{-2d}+\vert
    e^{i\frac{\lambda}{3}}-1\vert^{-2d}+\vert
    e^{i\frac{\lambda+2\pi}{3}}-1\vert^{-2d} \right) 
$$
  has, on
  $[-\pi,\pi[$, only one singularity at $\lambda=0$, associated with the same
  memory parameter  $d$ as for $f_X(\lambda)$.  The memory of $\mathbf{Y}$ has
  the same characteristics as
  the memory of $\mathbf{X}$. This was already noticed in \cite{Cha} and \cite{Hwa}.
\end{exx}
\begin{exx}
Consider now $f_X(\lambda)=\vert
  e^{i\lambda}-e^{\frac{2i\pi}{3}}\vert^{-2d}\vert
  e^{i\lambda}-e^{\frac{-2i\pi}{3}}\vert^{-2d}$, the spectral density of
  a long-memory seasonal process (see \cite{Oul}). The sampled spectral density is
 $$
  f_Y(\lambda)=\frac{1}{3}\sum_{j=-1}^1\vert
  e^{i\frac{\lambda-2\pi}{3}}-e^{\frac{2i\pi}{3}}\vert^{-2d}\vert
  e^{i\frac{\lambda-2\pi}{3}}-e^{\frac{-2i\pi}{3}}\vert^{-2d},
 $$
and is everywhere continuous on $[-\pi,\pi]$, except at $\lambda=0$ where
$$
f_Y(\lambda)\sim c\vert \lambda\vert^{-2d}.
$$
In other words the memory is preserved, but the seasonal effect
disappears after sampling. 
\end{exx}

\begin{exx}
Consider now a case of two seasonalities ($\pm \pi/4$ and $\pm 3\pi/4$)
  associated with two different memory parameters $d_1$ and $d_2$:
  $$
  f_X(\lambda)=\vert e^{i\lambda}-e^{\frac{i\pi}{4}}\vert^{-2d_1}\vert e^{i\lambda}-e^{\frac{-i\pi}{4}}\vert^{-2d_1}\vert e^{i\lambda}-e^{\frac{3i\pi}{4}}\vert^{-2d_2}\vert e^{i\lambda}-e^{\frac{-3i\pi}{4}}\vert^{-2d_2}.
 $$
It is easily checked that $f_Y$ has the same singular frequencies as $f_X$, with an exchange of the memory parameters: 

\begin{eqnarray*}
f_Y(\lambda)&\sim& c \Big |\lambda \pm\frac{\pi}{4}\Big |^{-2d_2} \qquad \hbox {near}\; \mp \pi/4\\
f_Y(\lambda)&\sim& c\Big |\lambda \pm\frac{3\pi}{4}\Big |^{-2d_1} \qquad \hbox
{near}\; \mp 3\pi/4.
\end{eqnarray*}
\end{exx}

\section{Appendix}
\subsection{Proof of Lemma \ref{preds}}
\label{sec:proof-lemma-refpreds}
Let us consider the two $z$-transforms of the bounded sequence $\sigma_Y(j)$:

\begin{eqnarray*}
\hat {\sigma}_Y^-(z)&=&\sum_{j=0}^\infty z^j\sigma_Y(j) \quad \vert z\vert<1
\end{eqnarray*}
and
\begin{eqnarray*}
\hat \sigma_Y^+(z)&=&\sum_{j=0}^\infty z^{-j}\sigma_Y(j)\quad \vert z\vert>1.
\end{eqnarray*}

On the first hand, from the representation 
$$
\sigma_Y(j)=\E(\sigma_X(T_j))=\int_{-\pi}^\pi f_X(\lambda)\hat S(\lambda)^jd_\lambda,
$$
we have 
\begin{eqnarray}
\hat {\sigma}_Y^-(z)&=&\sum_{j=0}^\infty z^{j}\int_{-\pi}^\pi f(\lambda)\left(\hat S(\lambda)\right)^j d\lambda
=
\int_{-\pi}^\pi \frac{f(\lambda)}{1-z \hat S(\lambda)}d\lambda,\quad \vert z\vert<1 \;\label{transf4}
\end{eqnarray}
\begin{eqnarray}
\hat \sigma_Y^+(z)&=& \sum_{j=0}^\infty z^{-j}\int_{-\pi}^\pi f(\lambda)\left( S(\lambda)\right)^j d\lambda
=
\int_{-\pi}^\pi \frac{f(\lambda)}{1-\frac{\hat S(\lambda)}{z}}d\lambda,\quad
\vert z\vert>1\;\label{transf4+}
\end{eqnarray}

On the second hand, let $C_r$ be the circle $\vert z\vert=r$. If $0<r<1$, for all $j\geq 0$ 

\begin{eqnarray}
{\frac{1}{2i\pi}}\int_{C_r}\hat {\sigma}_Y^-(z) z^{-j-1}
dz &=&{\frac{1}{2\pi}}\int_{-\pi}^\pi(r e^{i\theta})^{-j}\hat
{\sigma}_Y^-(r e^{i\theta})d\theta\nonumber\\
&=& \sum_{l=0}^\infty
 {\frac{r^{l-j}\sigma_Y(l)}{2\pi}}\int_{-\pi}^\pi e^{i(l-j)\theta} d\theta = \sigma_Y(j),
\label{transf1}
\end{eqnarray}

and, similarly, if $r>1$

\begin{eqnarray}
{\frac{1}{2i\pi}}\int_{C_r}\hat {\sigma}_Y^+(z) z^{j-1}
dz
&=&{\frac{1}{2\pi}}\int_{-\pi}^\pi(r e^{i\theta})^{j}\hat
{\sigma}_Y^+(r e^{i\theta})d\theta\nonumber\\
&=&\sum_{l=0}^\infty
 {\frac{r^{j-l}\sigma_Y(l)}{2\pi}}\int_{-\pi}^\pi e^{i(j-l)\theta} d\theta = \sigma_Y(j),
\label{transf1+}
\end{eqnarray}

Gathering (\ref{transf4}) with (\ref{transf1}) and (\ref{transf4+}) with (\ref{transf1+}) leads to
\begin{equation}
\sigma_Y(j)= 
\begin{cases}\label{+-}
 {\frac{1}{2\pi}}\int_{-\pi}^\pi
e^{-ij\theta}\left(r^{-j}\int_{-\pi}^\pi\frac{f(\lambda)}{1-re^{i\theta}\hat
    S(\lambda)}d\lambda\right)d\theta, & \text{if $r<1$} \\
{\frac{1}{2\pi}}\int_{-\pi}^\pi
e^{ij\theta}\left(r^{j}\int_{-\pi}^\pi\frac{f(\lambda)}{1-\frac{e^{-i\theta}\hat S(\lambda)}{r}}d\lambda\right)d\theta,& \text{if $r>1$}\\
\end{cases} 
\end{equation}

Changing the integrand in (\ref{+-}) into its conjugate when $r<1$, and $r$ for $1/r$ when $r>1$  leads to gives
\begin{equation}\label{repg}
\sigma_Y(j)=r^{-j}\int_{-\pi}^\pi
e^{ij\theta}g(r,\theta)d\theta, , \qquad \forall r\in[0,1[
\end{equation}
where  $g(r,\theta)$ is defined in (\ref{g}). As the first member does not depend on $r$, (\ref{et-apres}) is proved.

Let us now prove (\ref{Pois}).
Firstly,
\begin{multline*}
Im\left(\frac{1}{1-re^{-i\theta}\hat
S(\lambda)}+\frac{1}{1-re^{-i\theta}\hat
S(-\lambda)}\right)\\
=\frac{1}{2}\left(\frac{1}{1-re^{-i\theta}\hat
S(\lambda)}+\frac{1}{1-re^{-i\theta}\hat
S(-\lambda)}-\frac{1}{1-re^{i\theta}\hat
S(-\lambda)}-\frac{1}{1-re^{i\theta}\hat
S(\lambda)}\right)
\end{multline*}
is an odd function of $\lambda$. 
Hence, the imaginary part of the integrand in
(\ref{g}) disappears after integration. 
 
Secondly,
\begin{multline}\label{pos}
Re\left(\frac{1}{1-re^{-i\theta}\hat
S(\lambda)}+\frac{1}{1-re^{-i\theta}\hat
S(-\lambda)}\right)\\
=
\frac{1}{2}\left(\frac{1-r^2\vert \hat S(\lambda)\vert^2}{\vert 1-re^{-i\theta}\hat
S(\lambda)\vert^2}+\frac{1-r^2\vert \hat S(\lambda)\vert^2}{\vert 1-re^{-i\theta}\hat
S(-\lambda)\vert^2}+2\right),
\end{multline}
and the proof is over.

\subsection{A few technical results}
\label{sec:few-techn-results}
Hereafter we recall some properties used in the paper.

\begin{Lem}\label{Poisson}
The Poisson kernel  (\ref{Poissdef}) satisfies 

\begin{equation}\label{Poisson0}
2\pi P_s(t)\leq
\frac{1-s^2}{(1-s)^2}=\frac{1+s}{1-s}\leq \frac{2}{\eta}\qquad\forall s<1-\eta,\quad\hbox{with}\;s<1.
\end{equation}
 
\begin{equation}\label{Poisson1}
0<\delta<\vert t\vert\leq \frac{\pi}{2}\Longrightarrow  P_s(t)<P_s(\delta). 
\end{equation}

\begin{equation}\label{Poisson2}
2\pi \sup_{0<s<1}P_s(t)=\frac{1}{\vert \sin t\vert}\qquad \forall t\in ]-\pi/2,\pi/2[.
\end{equation}
\end{Lem}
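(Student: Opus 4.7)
\subsection*{Proof plan for Lemma \ref{Poisson}}

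My plan is to handle each of the three estimates separately by direct calculus on the closed form
$$
2\pi P_s(t)=\frac{1-s^2}{1-2s\cos t+s^2},
$$
exploiting respectively (i) a trivial lower bound on the denominator, (ii) the sign of $\partial_t P_s$, and (iii) the location of the critical point of $s\mapsto P_s(t)$.

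For (\ref{Poisson0}) I would simply observe that $\cos t\leq 1$ forces
$$
1-2s\cos t+s^2\geq 1-2s+s^2=(1-s)^2,
$$
so that $2\pi P_s(t)\leq (1-s^2)/(1-s)^2=(1+s)/(1-s)$. The remaining inequality $(1+s)/(1-s)\leq 2/\eta$ for $s<1-\eta$ is immediate after noting $1+s<2$ and $1-s>\eta$.

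For (\ref{Poisson1}) I would differentiate in $t$:
$$
\frac{\partial}{\partial t}\Bigl(\frac{1-s^2}{1-2s\cos t+s^2}\Bigr)=-\frac{2s(1-s^2)\sin t}{(1-2s\cos t+s^2)^2}.
$$
For $s\in(0,1)$ this derivative is strictly negative on $(0,\pi/2]$, so $P_s$ is strictly decreasing there; combined with the evenness $P_s(t)=P_s(-t)$, this yields the strict inequality for any $|t|>\delta$ in $(0,\pi/2]$.

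The only part that requires a little work is (\ref{Poisson2}). I would differentiate the function $s\mapsto \varphi(s):=(1-s^2)/(1-2s\cos t+s^2)$ on $(0,1)$ and, after simplification, find that the numerator of $\varphi'(s)$ equals $2\bigl(\cos t(1+s^2)-2s\bigr)$. Solving $\cos t(1+s^2)=2s$ inside $(0,1)$ gives the unique critical point
$$
s^{*}=\frac{1-|\sin t|}{\cos t}\in(0,1),\qquad t\in (-\pi/2,\pi/2)\setminus\{0\}.
$$
A direct substitution—using $1-s^{*2}=2|\sin t|(1-|\sin t|)/\cos^2 t$ and $1-2s^{*}\cos t+s^{*2}=2\sin^2 t(1-|\sin t|)/\cos^2 t$—yields $\varphi(s^{*})=1/|\sin t|$. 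Checking the boundary values $\varphi(0^{+})=1\leq 1/|\sin t|$ and $\varphi(1^{-})=0$ confirms that $s^{*}$ is a maximum, giving the equality (\ref{Poisson2}). The case $t=0$ is handled separately by the explicit formula $\varphi(s)=(1+s)/(1-s)\to+\infty$ as $s\to 1^{-}$, matching the convention $1/|\sin 0|=+\infty$.

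I do not anticipate any genuine obstacle; the only mildly delicate point is the algebraic simplification at $s^{*}$ in part (\ref{Poisson2}), and one must remember to verify that $s^{*}\in(0,1)$, which is equivalent to the elementary inequality $\cos t+|\sin t|>1$ on $(-\pi/2,\pi/2)\setminus\{0\}$.
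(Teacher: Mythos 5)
Your proof is correct and follows essentially the same route as the paper's: bounding the denominator by $(1-s)^2$ for (\ref{Poisson0}), monotonicity of the kernel in $|t|$ for (\ref{Poisson1}), and locating the critical point $s^{*}=(1-|\sin t|)/\cos t$ of $s\mapsto P_s(t)$ and evaluating there for (\ref{Poisson2}). The only difference is that you supply the algebraic details the paper omits (and correctly use monotonicity in $t$ rather than in $s$ for (\ref{Poisson1}), where the paper's wording appears to contain a slip).
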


\begin{proof}

The bound (\ref{Poisson0}) is direct. Inequality (\ref{Poisson1}) comes from the monotonicity of the kernel with respect to $s$.

Let us prove (\ref{Poisson2}): 
$$
\frac{\partial}{\partial s}\left(P_s(t)\right)=2 \frac{s^2\cos t-2s+\cos t}{(1+s^2-2s\cos t)^2}.
$$
It is easily checked that the numerator has one single root $s_0=\frac{1-\vert \sin
  t\vert}{\cos t}$ in $[0,1[$, and 
  $
  P_{s_0}(t)=\vert \sin t\vert^{-1}.
  $ 
\end{proof}
The following result comes from spectral considerations.
\begin{Lem}\label{int=1}
With $\rho=\rho(\lambda)$ and $\tau=\tau(\lambda)$ defined in (\ref{roto}),
for all $r\in [0,1[$ and $\theta\in ]-\pi,\pi[$, 
\begin{equation}
g^*(r,\theta):=\frac{1}{4}\int_{-\pi}^\pi \left(\frac{1}{\pi}+P_{r\rho}(\tau-\theta)+P_{r\rho}(\tau+\theta)\right)d\lambda=1
\end{equation}
\end{Lem}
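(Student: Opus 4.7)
The identity (\ref{Pois}) shows that $g^*(r,\theta)$ is exactly the value of the integral $g(r,\theta)$ from Lemma \ref{preds} taken with $f_X\equiv 1$. The plan is therefore to evaluate the complex form (\ref{g}) of $g(r,\theta)$ under $f_X\equiv 1$, and then transfer the conclusion back via the equivalence between (\ref{g}) and (\ref{Pois}).

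The main step will be a geometric-series expansion. Since $r\in [0,1[$ and $|\hat S(\lambda)|\leq 1$, the quantity $re^{-i\theta}\hat S(\pm\lambda)$ has modulus at most $r<1$ uniformly in $\lambda$, so I can write
$$\frac{1}{1-re^{-i\theta}\hat S(\pm\lambda)}=\sum_{k\geq 0}r^k e^{-ik\theta}\hat S(\pm\lambda)^k$$
and exchange sum and integral by uniform convergence. The key observation is that $\hat S(\lambda)^k=\sum_{j\geq 0}S^{*k}(j)\,e^{ij\lambda}$ is the Fourier series of the $k$-fold convolution $S^{*k}$, so
$$\int_{-\pi}^{\pi}\hat S(\pm\lambda)^k\,d\lambda \;=\; 2\pi\,S^{*k}(0)\;=\;2\pi\,P(T_k=0).$$
Because the sampling intervals are integer-valued with $\Delta_j\geq 1$, one has $T_k\geq k$ almost surely, hence $S^{*k}(0)=\delta_{k,0}$, and only the $k=0$ term survives: each of the two integrals equals $2\pi$.

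Summing the two equal contributions and dividing by $4\pi$ gives $g(r,\theta)=1$ in the case $f_X\equiv 1$. Using the equality of (\ref{g}) and (\ref{Pois}) established within the proof of Lemma \ref{preds} (the imaginary part of the integrand is odd in $\lambda$ and integrates to zero), this is exactly $g^*(r,\theta)=1$, as required. I do not anticipate any real obstacle: the only points needing a word of care are the uniform bound $|re^{-i\theta}\hat S(\lambda)|\leq r<1$ that legitimises the term-by-term integration, and the elementary fact that strictly positive integer sampling intervals force $T_k\geq k$, killing every term of the series except $k=0$.
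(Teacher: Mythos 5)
Your proof is correct, and it takes a noticeably different route from the paper's. The paper argues probabilistically: with $f_X\equiv 1$ the process $X$ is a white noise of variance $2\pi$, sampling preserves whiteness (because $T_h\geq h$ forces $\sigma_Y(h)=2\pi P(T_h=0)=2\pi\delta_0(h)$), and then the representation $\sigma_Y(j)=r^{-j}\int_{-\pi}^\pi e^{ij\theta}g^*(r,\theta)\,d\theta$ from Lemma \ref{preds} identifies the Fourier coefficients of $g^*$ as those of the constant function $1$, whence $g^*\equiv 1$ by uniqueness. You instead compute the integral directly: expand $(1-re^{-i\theta}\hat S(\pm\lambda))^{-1}$ as a geometric series (legitimate since $|re^{-i\theta}\hat S(\pm\lambda)|\leq r<1$ uniformly), integrate term by term, and observe that $\int_{-\pi}^\pi \hat S(\pm\lambda)^k\,d\lambda=2\pi S^{*k}(0)=2\pi\delta_{k,0}$ because $T_k\geq k$. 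The two arguments hinge on the same underlying fact ($P(T_k=0)=\delta_{k,0}$), but yours avoids the detour through the covariance of the sampled process and the Fourier-uniqueness step, replacing it with a self-contained term-by-term integration; the paper's version is shorter on the page because it reuses the machinery of Lemma \ref{preds}. Both correctly rely on the oddness-of-the-imaginary-part identity from the proof of Lemma \ref{preds} to pass between the complex form (\ref{g}) and the Poisson-kernel form defining $g^*$.
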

\begin{proof}
Notice first that if $f_X \equiv 1$, the process $X$ is a white noise with $\var (X_1)=2\pi$.  Hence, the sampled process is also a white noise with variance $2\pi$. Applying  (\ref{g}) to a white noise,  we get 
\begin{equation}
  2\pi \delta_0(j)= r^{-j}\int_{-\pi}^\pi e^{ij\theta} g^*(r,\theta) d\theta.\label{eq:star}
  \end{equation}
for all $r\in[0,1[$. Since $r^{j}\delta_0(j)= \delta_0(j) $, we can rewrite (\ref{eq:star}) as 
$$
2\pi \delta_0(j)= \int_{-\pi}^\pi e^{ij\theta} g^*(r,\theta) d\theta.
$$
This means that $g^*(r,\theta)$ is the Fourier transform of $(2\pi
\delta_0(j))_j$. Consequently $g^*(r,\theta)\equiv 1$.
\end{proof}
\subsection{Proof of Proposition \ref{espfinie}} \label{sec:proof-prop-refespf}

As for Proposition \ref{bounded}, the  proof consists in finding an integrable function $g(\theta)$ such that
\begin{equation}\label{dominée}
\vert g(r,\theta)\vert \leq g(\theta)\quad \forall r\in]0,1[,\theta\in]-\pi,\pi[.
\end{equation}

For that purpose, we need the following estimation of $\hat S(\lambda)$ near zero.
\begin{eqnarray}
\vert 1-\hat S(\lambda)\vert&=&\Big \vert(1-e^{i\lambda})\sum_{j\geq 1}S(j) \frac{1-e^{ij\lambda}}{1-e^{i\lambda}}\Big \vert =\vert 1-e^{i\lambda}\vert
\Big\vert \sum_{j\geq 1}S(j) e^{\frac{(j-1)\lambda}{2}}\frac{\sin (j\lambda/2)}{\sin\lambda/2}\Big\vert\nonumber\\
&\leq& \vert 1-e^{i\lambda}\vert \sum_{j\geq 1}jS(j)\nonumber\\
&=&\vert 2i\sin(\lambda/2)e^{i\lambda/2}\vert \sum_{j\geq 1}jS(j)=2\vert \sin(\lambda/2)\vert \sum_{j\geq 1}jS(j)\leq \vert\lambda\vert \sum_{j\geq 1}jS(j)\nonumber\\
&=&C\vert\lambda\vert\label{caror}.
\end{eqnarray}

Now we use the fact that
$$
\vert u\vert \leq u_0<1\Longrightarrow \vert 1-u\vert>1-u_0\quad\hbox{and}\quad\sin(\arg(1-u))< u_0.
$$
Since $u_0< \pi/2$, this implies $\arg(1-u)<\pi u_0/2$.

From this and inequality (\ref{caror}) we obtain
\begin{equation}\label{arg}
\vert \lambda\vert\leq \frac{1}{C}\Longrightarrow \vert \tau(\lambda)\vert
\leq \frac{\pi C\vert\lambda\vert}{2}
\end{equation}

For a fixed $\theta >0$ let $\lambda_0$ be such that $\phi$ is bounded on $[-\lambda_0,\lambda_0]$. Denoting
$$
b(\theta)=\min\left\{\lambda_0,\frac{1}{C},\frac{\theta}{\pi C}\right\},
$$
we separate 
$[-\pi,\pi]$ into four intervals:
$$
[-\pi,-b(\theta)[,\;[-b(\theta),0[,\;]0,b(\theta)],\; ]b(\theta),\pi].
$$
In the sequel we only deal with the two last intervals, and concerning the integrand in (\ref{Pois}) we only treat the part $P_{r\rho}(\tau-\theta)$:
\begin{eqnarray*}
I_1(\theta)+I_2(\theta)&=&\int_0^{b(\theta)}f(\lambda)P_{r\rho(\lambda)}(\tau(\lambda)-\theta)d\lambda+\\
&+&\int_{b(\theta)}^{\pi}f(\lambda)P_{r\rho(\lambda)}(\tau(\lambda)-\theta)d\lambda.
\end{eqnarray*}

$\bullet$ Bounding $I_1$:

From (\ref{arg}), since $b(\theta)\leq \theta/(\pi C)$, we have 
$
\vert \tau(\lambda)\vert\leq \theta/2
$
which implies
$$
\frac{\theta}{2} \leq  \vert \tau(\lambda)-\theta\vert.
$$ 
Via (\ref{Poisson1}) and (\ref{Poisson2}), this leads to
$$
P_{r\rho}(\tau-\theta)\leq P_{r\rho}(\theta/2)\leq \frac{1}{2\theta}.
$$
Consequently
\begin{eqnarray*}
I_1&\leq& \frac{ \sup_{[0,b(\theta)]} (\phi(\lambda))}{2\theta}
\int_0^{b(\theta)}\lambda^{-2d}d\lambda= \frac{\sup_{[0,b(\theta)]}
  (\phi(\lambda))}{2\theta} (b(\theta))^{-2d+1}\\
&\leq& C_1\theta^{-2d}
\end{eqnarray*}
since $b(\theta)\leq \theta/(\pi C)$ and $-2d+1>0$.\\

$\bullet$ Bounding $I_2$:

When $\lambda>b(\theta)$, we have
$\lambda^{-2d}\leq C_2\max\{\theta^{-2d},1\}$ for some constant $C_2$. Hence
\begin{equation}\label{I2}
I_2\leq C_2\max\{\theta^{-2d},1\}\int_{-\pi}^{\pi}\phi(\lambda)P_{r\rho}(\theta-\tau)d\lambda.
\end{equation}
Since
$\phi$ is bounded in a neighbourhood of zero, the arguments used to prove  Proposition \ref{bounded} show that the integral in (\ref{I2}) is bounded by a constant. 

Finally $I_1+I_2$ is bounded by an integrable function $g(\theta)$ and the proposition is proved.

\subsection{Proof of Proposition \ref{espinf}}\label{sec:proof-prop-refesp}
The following lemma gives the local behaviour of $\hat
S(\lambda)$ under assumption (\ref{reg}).
 
\begin{Lem}\label{control}
Since $S(j)\sim c j^{-\gamma}$ with $1<\gamma<2$ and $c>0$,
\begin{equation}\label{mod}
\vert\lambda\vert^{1-\gamma}(1-\hat S(\lambda))\to Z,\;\hbox{if}\;\lambda\to 0.
\end{equation}
where $Re(Z)>0$ and $Im(Z)<0$.
\end{Lem}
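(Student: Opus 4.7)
The plan is to reduce $1-\hat S(\lambda)$ to a single oscillatory integral against the tail $P(T_1>\cdot)$, to change variables so that the prefactor $\lambda^{\gamma-1}$ appears explicitly, and then to identify the limit through a classical Mellin formula.

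Starting from the elementary identity $1-e^{i\lambda j}=-i\lambda\int_0^j e^{i\lambda x}dx$ and Fubini,
$$
1-\hat S(\lambda)=\E\bigl(1-e^{i\lambda T_1}\bigr)=-i\lambda\int_0^\infty e^{i\lambda x}\,P(T_1>x)\,dx\qquad(\lambda>0).
$$
Summing the asymptotic $S(j)\sim cj^{-\gamma}$ gives $P(T_1>x)\sim \frac{c}{\gamma-1}\,x^{1-\gamma}$ as $x\to\infty$; in particular $x^{\gamma-1}P(T_1>x)$ is bounded on $(0,\infty)$. The substitution $u=\lambda x$ then yields
$$
\lambda^{1-\gamma}\bigl(1-\hat S(\lambda)\bigr)=-i\int_0^\infty e^{iu}\,\lambda^{1-\gamma}P(T_1>u/\lambda)\,du,
$$
whose integrand converges pointwise to $\frac{c}{\gamma-1}\,e^{iu}u^{1-\gamma}$ as $\lambda\to 0^+$.

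Once the passage to the limit is legitimate, the candidate value is
$$
Z=-\frac{ic}{\gamma-1}\int_0^\infty e^{iu}u^{1-\gamma}\,du=\frac{c\,\Gamma(2-\gamma)}{\gamma-1}\,e^{i\pi(1-\gamma)/2},
$$
obtained from the Mellin identity $\int_0^\infty u^{s-1}e^{iu}du=\Gamma(s)e^{i\pi s/2}$ applied with $s=2-\gamma\in(0,1)$. For $\gamma\in(1,2)$ the phase $\pi(1-\gamma)/2$ lies in $(-\pi/2,0)$, so its cosine is strictly positive and its sine strictly negative, and combined with $\Gamma(2-\gamma)>0$ and $c/(\gamma-1)>0$ this gives $Re(Z)>0$ and $Im(Z)<0$, as required.

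The delicate step, and the only real obstacle, is the interchange of limit and integration: the natural pointwise bound $\lambda^{1-\gamma}P(T_1>u/\lambda)\leq Cu^{1-\gamma}$ is integrable near $0$ but not at $\infty$, so dominated convergence cannot be invoked directly on the oscillatory integral. I would handle this by splitting at a fixed threshold $M$: on $[0,M]$ the pointwise bound $Cu^{1-\gamma}$ is integrable and standard dominated convergence produces the expected contribution for each fixed $M$; on $[M,\infty)$ a single integration by parts transfers a decaying factor $1/(iu)$ onto the monotone function $u\mapsto\lambda^{1-\gamma}P(T_1>u/\lambda)$, whose total variation on $[M,\infty)$ is uniformly $O(M^{1-\gamma})$ in $\lambda$, so the tail remainder is $o(1)$ as $M\to\infty$ uniformly in $\lambda$. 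Letting $M\to\infty$ after $\lambda\to 0^+$ concludes the identification of $Z$.
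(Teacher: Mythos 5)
Your proof is correct, but it follows a genuinely different route from the paper's. The paper separates $1-\hat S(\lambda)$ into $\sum_j S(j)(1-\cos(j\lambda))$ and $-\sum_j S(j)\sin(j\lambda)$ and invokes Zygmund's classical asymptotics for the series $\sum j^{-\gamma}(1-\cos(j\lambda))$ and $\sum j^{-\gamma}\sin(j\lambda)$ as $\lambda\to 0$, which yields the two constants $c_1$ and $c_2$ and hence the signs of $Re(Z)$ and $Im(Z)$ with almost no computation (at the price of citing a nontrivial external result, and of some care with signs: the paper's product of the two negative factors $\Gamma(1-\gamma)$ and $\cos(\pi\gamma/2)$ actually makes $c_2>0$, and it is the relation $Im(1-\hat S)=-\sum S(j)\sin(j\lambda)$ that restores $Im(Z)<0$). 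You instead use the tail-integral representation $1-\hat S(\lambda)=-i\lambda\int_0^\infty e^{i\lambda x}P(T_1>x)\,dx$, the summed tail asymptotic $P(T_1>x)\sim\frac{c}{\gamma-1}x^{1-\gamma}$, and the Mellin evaluation $\int_0^\infty u^{1-\gamma}e^{iu}du=\Gamma(2-\gamma)e^{i\pi(2-\gamma)/2}$; this is essentially the standard derivation of the characteristic-function expansion for a regularly varying positive random variable, and it buys you an explicit closed form $Z=\frac{c\,\Gamma(2-\gamma)}{\gamma-1}e^{i\pi(1-\gamma)/2}$ (consistent with the paper's constants) from which both sign claims are immediate. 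You also correctly identify the one delicate point -- the interchange of limit and integration cannot be done by dominated convergence on all of $(0,\infty)$ because $u^{1-\gamma}$ is not integrable at infinity -- and your fix (dominated convergence on $[0,M]$ plus a second-mean-value/integration-by-parts bound $O(M^{1-\gamma})$, uniform in $\lambda$, for the oscillatory tail against the monotone function $u\mapsto\lambda^{1-\gamma}P(T_1>u/\lambda)$) is sound; note only that the Fubini step at the start is itself conditional (since $\E T_1=\infty$ for $\gamma<2$) and should be justified by the same truncation-plus-Dirichlet argument rather than by absolute integrability.
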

\begin{proof}
From the assumption on $S(j)$, 
\begin{eqnarray*}
\sum_{j\geq 1}S(j)(1-\cos(j\lambda))&\sim_{\lambda\to 0}&c \sum_{j\geq
  1}j^{-\gamma}(1-\cos(j\lambda))\\
\end{eqnarray*}
and
\begin{eqnarray*}
\sum_{j\geq 1}S(j) \sin(j\lambda)&\sim_{\lambda\to 0}&c \sum_{j\geq
  1}j^{-\gamma}\sin(j\lambda).
\end{eqnarray*}
Then,  using well known results of Zygmund (\cite {Zyg}; pages 186 and 189)
\begin{eqnarray*}
\sum_{j\geq
  1}j^{-\gamma}(1-\cos(j\lambda))&\sim_{\lambda\to 0}& \frac{c}{\gamma-1} \vert
\lambda\vert^{\gamma -1}=: c_1\vert
\lambda\vert^{\gamma -1}.\\
\end{eqnarray*}
and 
\begin{eqnarray*}
\sum_{j\geq
  1}j^{-\gamma}\sin(j\lambda)&\sim_{\lambda\to 0}& c \Gamma(1-\gamma)\cos\left(\frac{\pi\gamma}{2}\right)\vert
\lambda\vert^{\gamma -1}=:c_2\vert
\lambda\vert^{\gamma -1}\\
\end{eqnarray*}
It is clear that $c_1>0$, and $c_2<0$ follows from the fact that $\Gamma(x)<0$ for $x\in ]-1,0[$ and  
$\cos\left(\pi\gamma/2\right)<0$.  
\end{proof}


In the sequel we take $\lambda>0$. From this lemma, if $\lambda$ is small enough (say
$0\leq \lambda \leq \lambda_0$),
\begin{eqnarray}
c_3
\lambda ^{\gamma-1}\leq&\tau(\lambda)&\leq c'_3
\lambda^{\gamma-1}\label{taulong}
\end{eqnarray}
and
\begin{eqnarray}
1-c_4 
\lambda ^{\gamma-1}\leq &\rho(\lambda)&\leq  1-c'_4
\lambda^{\gamma-1}\label{rholong}
\end{eqnarray}
where the constants are positive.

For a fixed $\theta>0$, we deduce from (\ref{taulong})
\begin{equation}\label{liminfarg}
\lambda<\min\left\{\lambda_0,\left(\frac{\theta}{c'_3}\right)^{1/(\gamma-1)}\right\}
\quad\hbox{implies}\quad0<\theta-c'_3\lambda^{\gamma-1}\leq \theta-\tau(\lambda)
\end{equation}

After choosing $\lambda_1\leq\lambda_0$ such that $\phi$ is bounded on $[0, \lambda_1]$ define
$$
c(\theta)=\min\left\{\lambda_1,\left(\frac{\theta}{c'_3}\right)^{1/(\gamma-1)}\right\}.
$$
Then we split $[-\pi,\pi]$ into six intervals
$$
[-\pi,-\lambda_1[,\;[-\lambda_1,
-c(\theta)/2[,\;[-c(\theta)/2,0[, \;]0, c(\theta)/2],\; ]c(\theta)/2, \lambda_1],\;]\lambda_1,\pi].
$$
 
We only consider the integral on the three last domains and  
the part $P_{r\rho}(\tau-\theta)$ of the integrand in (\ref{Pois}).
 
$\bullet$ When $\lambda\in ]0, c(\theta)/2]$,  inequality (\ref{liminfarg}) and properties (\ref{Poisson1}) and 
(\ref{Poisson2}) of the Poisson kernel lead to
$$  
P_{r\rho}(\theta-\tau)\leq P_{r\rho}(\theta-c'_3\lambda^{\gamma-1})\leq \frac{C}{\theta-c'_3\lambda^{\gamma-1}},
$$
whence 
\begin{eqnarray*}
I_1&=&\int_0^{c(\theta)/2} f(\lambda)P_{r\rho}(\theta-\tau)d\lambda
\leq
C'_1\int_0^{c(\theta)/2}\frac{\lambda^{-2d}}{\theta-c'_3\lambda^{\gamma-1}}d\lambda\\
&\leq& C'_1\int_0^{\frac{1}{2}\left(\frac{\theta}{c'3}\right)^{1/(\gamma-1)}}\frac{\lambda^{-2d}}{\theta-c'_3\lambda^{\gamma-1}}d\lambda\\
&=& C'_2\theta^{\frac{-2d+1}{\gamma-1}-1}\int_0^{2^{-\frac{1}{\gamma-1}}}\frac{u^{\frac{-2d+1}{\gamma-1}-1}}{1-u}du.
\end{eqnarray*}
Since $\frac{-2d+1}{\gamma-1}-1>-1$, the last integral is finite, implying 
$$
I_1\leq C'_3 \theta^{\frac{-2d+1}{\gamma-1}-1},
$$
which is an integrable function of $\theta$.

$\bullet$ Thanks to (\ref{Poisson0}) and to the r.h.s. of (\ref{rholong}), we
have on the interval $]c(\theta)/2, \lambda_1]$
$$
P_{r\rho}(\theta-\tau)\leq \frac{C'_3}{\lambda^{\gamma-1}}. 
$$
Since  $\phi$ is bounded on this domain,
\begin{eqnarray*}
I_2&\leq& C'_3 \sup_{0<\lambda\leq\lambda_1}{\phi(\lambda)} \int_{c(\theta)/2}^{\lambda_1}\lambda^{-2d+1-\gamma}d\lambda\\
&=& \frac{C'_3 \sup_{0<\lambda\leq\lambda_1}{\phi(\lambda)}}{-2d+2-\gamma}\left( \lambda_1^{-2d+2-\gamma}-(c(\theta)/2)^{-2d+2-\gamma}\right)\\
&\leq&\frac{C'_3 \sup_{0<\lambda\leq\lambda_1}{\phi(\lambda)}}{-2d+2-\gamma}\left( \lambda_1^{-2d+2-\gamma}+(c(\theta)/2)^{-2d+2-\gamma}\right)\\
&=&  C'_4\left(\lambda_1^{-2d+2-\gamma}+\left(\min\left\{\lambda_1,\left(\frac{\theta}{c'3}\right)^{1/(\gamma-1)}\right\}\right)^{-2d+2-\gamma}\right).
\end{eqnarray*}
where the function between brackets is integrable because 
$$
\frac{-2d+2-\gamma}{\gamma-1}=\frac{-2d+1}{\gamma-1}-1>-1.
$$

$\bullet$ Finally, 
\begin{eqnarray*}
I_3=\int_{\lambda_1}^\pi f(\lambda)P_{r\rho}(\theta-\lambda)d\lambda
\leq \lambda_1^{-2d}\int_{-\pi}^\pi \phi(\lambda)P_{r\rho}(\theta-\lambda)d\lambda
\end{eqnarray*}
which has already been treated since $\phi$ is bounded near zero.\\

Gathering the above results on $I_1$, $I_2$ and  $I_3$ completes the proof.

\end{document}